\newcommand{\N}{\mathbb{N}}
\newcommand{\F}{\mathcal{F}}
\newtheorem{proposition}{Proposition}
\newtheorem{theorem}{Theorem}
\newtheorem{corollary}{Corollary}
\newtheorem{remark}{Remark}
\DeclareMathOperator{\Tr}{Tr}
\title[Convergence speed for the average density of eigenfunctions]{Convergence speed for the average density of eigenfunctions for singular Riemannian manifolds}
\subjclass[2020]{Primary: 58C40; Secondary: 28A33, 53C17, 35P20}
\keywords{singular Riemannian metric, Laplace-Beltrami operator, Wasserstein, concentration of eigenfunctions, Grushin, sub-Riemannian geometry}
\author{Charlotte Dietze}
\address[Charlotte Dietze]{Sorbonne Université, CNRS, Laboratoire Jacques-Louis Lions,
4 place Jussieu,
75005 Paris,
France
}
 \email{Charlotte.Dietze@sorbonne-universite.fr}
\begin{document}
\maketitle
\begin{abstract}
We consider a class of singular Riemannian metrics on a compact Riemannian manifold with boundary and the eigenfunctions of the corresponding Laplace-Beltrami operator. In our setting, the average density of eigenfunctions with eigenvalue less than $\lambda$ converges weakly to the uniform normalised measure on the boundary as $\lambda\to\infty$. In this work, we show a quantitative estimate on the speed of this convergence in the Wasserstein-sense in the transverse coordinate to the boundary. 
\end{abstract}
\section{Introduction}
We consider a smooth compact Riemannian manifold $X$ with boundary of topological dimension $n+1$, $n\in\N$. We equip $X$ with a Riemannian metric $g$ that is in a certain class of singular Riemannian metrics $g$ with a power-type singularity at the boundary. More precisely, we identify a tubular neighbourhood of the boundary of $X$ by $[0,1]_x\times M_y=:X_1$, where $M$ is an $n$-dimensional Riemannian manifold that is isomorphic to $\partial X$. The class of metrics $g$, which we consider, are smooth in the interior of $X$ and for a parameter $\beta\in[2/n,\infty)$, they satisfy on the tubular neighbourhood $X_1$ of $\partial X$, which is identified with $[0,1]\times M$,
\begin{equation}\label{eq:g}
g=dx^2+x^{-\beta} g_1(x),
\end{equation}
where $(g_1(x))_{x\in[0,1]}$ is a continuous family of smooth, non-degenerate metrics on $M$. We denote the corresponding (non-negative) Laplace-Beltrami operator with Dirichlet boundary conditions on the boundary of $X$ by $\Delta$, and the Riemannian volume measure on $X$ with respect to $g$ is denoted by $dv_g$. For $\beta=2$, the Laplace-Beltrami operator corresponds to a Grushin-type operator plus a potential of the form $x^{-2}$ after a suitable change of functions. 

\medskip

In this paper, we give a quantitative estimate of how fast the average density of eigenfunctions of $\Delta$ accumulates at the boundary of $X$. 
\subsection*{Statement of the main theorem} 
In \cite{yves}, it was shown that the average density of eigenfunctions of $\Delta$ converges weakly to the uniform distribution on the boundary of $X$. In this paper, we go one step further and give a quantitative estimate of the speed of this convergence in the Wasserstein-$p$-sense in the $x$-coordinate, see Corollary \ref{co:Wasserstein} below.

\medskip

More precisely, we consider the average density $F_\lambda$ of eigenfunctions of $\Delta$ with eigenvalue below $\lambda$, which is normalised such that $\int_X F_\lambda(z)\, dv_g(z)=1$, see \eqref{eq:F_lambda_def} below for the definition of $F_\lambda$. We show below in Theorem \ref{th:main} for $p=1$ and Theorem \ref{th:moments} for $p\ge2$ that 
\begin{equation}\label{eq:Wasserstein_x_simplified_into}
\int_{X} \min\{x^p,1\}F_\lambda(x,y)\,dv_g(x,y)\le C A_\lambda,
\end{equation}
where $A_\lambda$ is an explicit convergence rate depending on $p$, $\beta$, $n$ and $\lambda$ with $A_\lambda\to0$ as $\lambda\to\infty$, see 
\eqref{eq:A_lambda_th_main_cases} and \eqref{eq:A_lambda_th_moments_cases}
for the precise definitions of $A_\lambda$. Moreover, for $p\ge2$ and $\frac{2}{n}\le\beta\le\frac{6}{n}$, we \emph{cannot} hope for a much better convergence rate $A_\lambda$, see Remark \ref{re:optimality} below.

\medskip

\subsection*{General literature}
In this paper, we consider a Laplace-Beltrami operator of a singular Riemannian metric, which corresponds for $\beta=2$ to a Grushin-type operator plus a potential. There is a vast literature on the spectral theory for such Laplace-Beltrami operators of singular Riemannian metrics and sub-Riemannian geometry, and in particular the Grushin operator. Many of these works show Weyl asymptotics, or asymptotics for the corresponding heat kernel. 

\medskip

The small time asymptotics for sub-Riemannian heat kernels were studied in \cite{colin2018spectral,colin2021small,colin2022spectral,chang2015heat}. In \cite{menikoff1978eigenvalues}, the authors consider classical pseudo-differential operators on smooth manifolds with a principal symbol that vanishes  exactly to second order on a smooth symplectic submanifold, which corresponds to the Grushin case. In this setting, they prove Weyl asymptotics. Another paper considering Weyl asymptotics in the Grushin case is \cite{boscain}, where the eigenfunctions and eigenvalues are computed explicitly, which also allows to deduce Weyl asymptotics. There are many more references dealing with the Grushin case, such as \cite{abatangelo2025solutions, lamberti2021shape}.  Moreover, we would like to mention the work \cite{chitour}, where Weyl asymptotics were shown for a large class of singular Riemannian metrics, including the Grushin case. 

\medskip

For the model, which we consider in the present paper, Weyl asymptotics were proved in \cite{yves}, and we would also like to refer to the earlier works \cite{metivier, vulis}. In \cite{yves}, we also showed that if $\beta\ge 2/n$, most of the eigenfunctions with eigenvalue less than $\lambda$ accumulate near the boundary as $\lambda\to\infty$. Here $\beta=2/n$ is a critical value. In \cite{larry} for the case $\beta>2/n$ and in \cite{charlotte} for the critical case $\beta=2/n$, we identified the typical length-scale of where eigenfunctions are localised near the boundary.  

\subsection*{Comparison of the main result \eqref{eq:Wasserstein_x_simplified_into} with earlier works}
In \cite{larry} for the case $\beta>2/n$ and in \cite{charlotte} for $\beta=2/n$, we gave a precise description of at which scale near the boundary the average density of eigenfunctions of $\Delta$ lives. The results in those works contain information of the following form: For any $\epsilon>0$, there exists an explicit $L_\lambda\le 1$ depending on $\beta$, $n$ and $\lambda$ and with $L_\lambda\to0$ as $\lambda\to\infty$ such that
\begin{equation}\label{eq:Larry_Charlotte}
\int_{X_{L_\lambda}} F_\lambda(z)\,dv_g(z)\ge 1-\epsilon, 
\end{equation}
where we use for any $a\in[0,1]$ the notation $X_a:=[0,a]\times M$, which we identify with the corresponding tubular neighbourhood of the boundary of $X$. Note that estimates of the form \eqref{eq:Larry_Charlotte} are insufficient to prove \eqref{eq:Wasserstein_x_simplified_into}. In \cite{larry}, we also showed that in the case $\beta>2/n$ and for fixed smooth and \emph{bounded} functions $V$, the quantity
\begin{equation}
\int_{X_1} V(\sqrt\lambda x,y)F_\lambda(x,y)\,dv_g(x,y)
\end{equation}
converges to a finite number as $\lambda\to\infty$. So in particular, due to the boundedness condition on $V$, we \emph{cannot} take $V(x,y)=x^p$ in \cite{larry} to deduce estimates of the form \eqref{eq:Wasserstein_x_simplified_into}.

\medskip

\subsection*{Literature on the convergence speed for the average density of eigenfunctions} 
To our knowledge, there are only very few results in the literature that give a quantitative estimate on the convergence speed of the average density of eigenfunctions, for instance in some Wasserstein-sense. In the regular case of a compact smooth Riemannian manifold $X$ without boundary, equipped with a non-singular metric, we refer to Hörmander \cite[Theorem 17.5.7]{hoermander3}, which shows that the average density of eigenfunctions $F_\lambda$ converges in the $L^\infty$-norm to the uniform measure on $X$ with a convergence speed of $\lambda^{-1}$. From \cite[Theorem 17.5.7]{hoermander3}, one can also show a corresponding statement in the case of a smooth Riemannian manifold with boundary, for instance in a Wasserstein-$1$-sense.

\medskip

\subsection*{Notation}
We denote the number of eigenvalues of $\Delta$ with eigenvalues less than $\lambda$ by $N(\lambda)$. Here we count the eigenvalues with multiplicity. 

The average density $F_\lambda$ of eigenfunctions of $\Delta$ with eigenvalue below $\lambda$ is defined as follows: for any orthonormal basis $\{\Psi\}$ of the span of all eigenfunctions of $\Delta$ with eigenvalues less than $\lambda$, we define 
\begin{equation}\label{eq:F_lambda_def}
F_{\lambda}(z) := \frac{1}{N(\lambda)} \sum_{\substack{\Psi \text{ } L^{2}(X,dv_{g})\text{-normalized} \\ \text{eigenfunction of } \Delta \\ \text{with eigenvalue } < \lambda}} |\Psi(z)|^2, \quad z \in X.
\end{equation}
In particular, we have
\begin{equation}\label{eq:F_lambda_integral}
\int_{X} F_{\lambda}(z) \, dv_{g}(z) = 1.
\end{equation}
Note that the definition of $F_\lambda$ is independent of the orthonormal basis $\{\Psi\}$ chosen. To see this, note that for any continuous and bounded function $h$ on $X$, we have
\begin{equation}
\int_{X} h(z)F_{\lambda}(z) \, dv_{g}(z)=\Tr_{L^{2}(X,dv_{g})}(h1_{\Delta<\lambda}). 
\end{equation}

\medskip

Below, we will use $C>0$ as a constant that can change from line to line.  

\medskip

\subsection*{Detailed statement of the main result}
Let us now state the main theorem.

\begin{theorem}\label{th:main}
Let $\beta\in[2/n,\infty)$ and let $F_\lambda$ be the normalised density of eigenfunctions of $\Delta$ on $X$ with eigenvalues $<\lambda$, defined in \eqref{eq:F_lambda_def}. Define
\begin{equation}\label{eq:A_lambda_th_main_cases}
A_\lambda:=\begin{cases}
\lambda^{-\frac{1}{2}+\frac{1}{n\beta}} &\text{if } \beta>\frac{2}{n} \\[2mm]
\frac{\log(\log(\lambda))}{\log(\lambda)}&\text{if } \beta=\frac{2}{n}
\end{cases}
\end{equation}
%
Then there exists a constant $C>0$ and $\lambda_0>0$ such that for all $\lambda\ge\lambda_0$, 
\begin{equation}\label{eq:X_ohne_X_A_lambda_into_th}
\int_{X\setminus X_{A_\lambda}} F_\lambda(x,y)\,dv_g(x,y)\le C A_\lambda. 
\end{equation}
In particular, we also have for a possibly different constant $C>0$ that 
\begin{equation}\label{eq:Wasserstein_x_simplified_into_th}
\int_{X} \min\{x,1\}F_\lambda(x,y)\,dv_g(x,y)\le C A_\lambda. 
\end{equation}
\end{theorem}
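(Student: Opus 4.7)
The plan is to control the numerator $\Tr(1_{X\setminus X_{A_\lambda}}\,1_{\Delta<\lambda})$ by a pointwise bound on the on-diagonal spectral kernel, and then divide by $N(\lambda)$ using the Weyl asymptotics of $\Delta$ proved in \cite{yves}. Writing
\[
\int_{X\setminus X_{A_\lambda}}F_\lambda(z)\,dv_g(z)=\frac{1}{N(\lambda)}\int_{X\setminus X_{A_\lambda}} K^s_\lambda(z,z)\,dv_g(z),
\]
where $K^s_\lambda(z,w):=\sum_{\lambda_k<\lambda}\Psi_k(z)\overline{\Psi_k(w)}$ is the Schwartz kernel of $1_{\Delta<\lambda}$, the task reduces to an estimate on $K^s_\lambda(z,z)$ for $z$ away from the singular boundary.

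The key estimate I would prove is
\[
K^s_\lambda(z,z)\le C\,\lambda^{(n+1)/2}\quad\text{for every }z=(x,y)\in X_1\text{ with }x\ge c_0\lambda^{-1/2},
\]
with constants $c_0,C>0$ depending only on $n$, $\beta$ and the family $(g_1(x))_x$. Since $x$ is the $g$-distance from $z$ to $\partial X$, finite speed of propagation of $\cos(\tau\sqrt\Delta)$ renders the singular set $\{x=0\}$ invisible on the time interval $|\tau|\le x/2$; after the rescaling $y\mapsto x_0^{\beta/2}y$ around the base point $z=(x_0,y_0)$, the local geometry on a $g$-ball of radius $\simeq x_0$ is uniformly bounded in $x_0$, and a standard Hörmander--Tauberian argument then yields the bound once $x_0\gtrsim\lambda^{-1/2}$. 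Since $A_\lambda\gg\lambda^{-1/2}$ as $\lambda\to\infty$ for both choices in \eqref{eq:A_lambda_th_main_cases}, this bound applies throughout $\{x\ge A_\lambda\}$.

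Integrating the kernel bound against $dv_g=\sqrt{\det g_1(x)}\,x^{-n\beta/2}\,dx\,dy$ (and absorbing an $O(\lambda^{(n+1)/2})$ contribution from the compact part $X\setminus X_1$) yields
\[
\int_{X\setminus X_{A_\lambda}}K^s_\lambda(z,z)\,dv_g(z)\le C\lambda^{(n+1)/2}\int_{A_\lambda}^{1}x^{-n\beta/2}\,dx+C\lambda^{(n+1)/2}.
\]
For $\beta>2/n$ the remaining integral is $\simeq A_\lambda^{1-n\beta/2}$, while for $\beta=2/n$ it becomes $\log(1/A_\lambda)\simeq\log\log\lambda$. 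Dividing by the Weyl asymptotic of $\Delta$ from \cite{yves}, which takes the form $N(\lambda)\simeq c\lambda^{n(\beta+2)/4}$ when $\beta>2/n$ and $N(\lambda)\simeq c\lambda^{(n+1)/2}\log\lambda$ when $\beta=2/n$, and substituting the explicit $A_\lambda$ from \eqref{eq:A_lambda_th_main_cases}, all exponents balance and produce exactly \eqref{eq:X_ohne_X_A_lambda_into_th}. The Wasserstein-type bound \eqref{eq:Wasserstein_x_simplified_into_th} then follows from the pointwise inequality $\min\{x,1\}\le A_\lambda\cdot 1_{X_{A_\lambda}}(z)+1_{X\setminus X_{A_\lambda}}(z)$ (valid once $A_\lambda\le 1$), upon integrating against $F_\lambda$ and using $\int_X F_\lambda\,dv_g=1$ together with \eqref{eq:X_ohne_X_A_lambda_into_th}.

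\textbf{Main obstacle.} The delicate point is the uniform pointwise kernel bound with constants independent of $x$ as $x\to 0$. Hörmander--Tauberian estimates are classical on smooth manifolds, but here one must verify that the $y$-rescaling really produces uniformly bounded geometric data (curvature, injectivity radius, cutoff derivative norms) across the full range $x\in[c_0\lambda^{-1/2},1]$, and that the Tauberian remainder stays of smaller order than $\lambda^{(n+1)/2}$ after integration against the heavy density $x^{-n\beta/2}$; otherwise the delicate balance of exponents against the Weyl count of $N(\lambda)$ would be spoiled.
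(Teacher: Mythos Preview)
Your approach is \emph{genuinely different} from the paper's and, if the key lemma can be established, yields the same conclusion with less bookkeeping. The paper never proves a pointwise bound on the spectral function; instead it passes to a quasi-isometric separable model $\tilde g$, uses an IMS-type trace inequality (Proposition~\ref{prop:localisation}(ii)) to bound $\sum_{\Psi}\int_{X\setminus X_b}|\Psi|^2$ by a Neumann eigenvalue count for $\tilde\Delta$ on $X\setminus X_a$ plus a localisation error $\frac{1}{\lambda}\int|\chi'|^2|\Psi|^2$, and then controls the count via the one-dimensional operators $P_\mu$ (Proposition~\ref{prop:number_eigenvalues}). For $\beta>2/n$ the localisation error is a priori too large, and the paper runs a finite bootstrap over nested annuli $X_{(k+1)B_\lambda}\setminus X_{kB_\lambda}$ to absorb it. Your route sidesteps the cutoff, the model operator, and the bootstrap entirely: once $K^s_\lambda(z,z)\le C\lambda^{(n+1)/2}$ holds uniformly on $\{x\ge c_0\lambda^{-1/2}\}$, integrating against $x^{-n\beta/2}\,dx\,dy$ reproduces exactly the counts in Proposition~\ref{prop:number_eigenvalues}(i)--(ii), and dividing by the Weyl asymptotics gives \eqref{eq:X_ohne_X_A_lambda_into_th} in one stroke. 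What each buys: the paper's argument is purely variational (min--max and quadratic-form comparison) and never needs microlocal or parabolic machinery; yours is shorter and conceptually cleaner but trades that for a harder analytic input.

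The pointwise bound is the real content, and here your sketch is slightly optimistic. Under the rescaling $\tilde y=x_0^{-\beta/2}(y-y_0)$ the metric on $\{|x-x_0|<x_0/2\}$ becomes $dx^2+(x/x_0)^{-\beta}g_1(x)(d\tilde y,d\tilde y)$, which is uniformly \emph{elliptic} (the coefficients lie between fixed positive constants) but \emph{not} uniformly $C^1$: the $x$-derivative of $(x/x_0)^{-\beta}$ is of size $1/x_0$. Consequently the curvature blows up like $x_0^{-2}$ and the standard H\"ormander parametrix does not apply with uniform constants. For the \emph{upper} bound you want, however, this is not fatal: one can instead use $K^s_\lambda(z_0,z_0)\le e\,p_{1/\lambda}(z_0,z_0)$ and invoke Aronson--Nash heat-kernel bounds, which require only $L^\infty$ ellipticity and give $p_t(z_0,z_0)\le Ct^{-(n+1)/2}$ with $C$ depending solely on the ellipticity ratio, after localising via Davies--Gaffney off-diagonal decay on the scale $\sqrt t\ll x_0$. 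If you pursue this line, that is the lemma to state and prove carefully; the ``uniform bounded geometry'' phrasing should be replaced by ``uniform ellipticity in rescaled coordinates''.
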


\begin{remark}\label{re:xF_von_Indikator_beim_Rand}
To see how \eqref{eq:Wasserstein_x_simplified_into_th} follows from \eqref{eq:X_ohne_X_A_lambda_into_th}, note that 
\begin{equation}
\int_{X} \min\{x,1\}F_\lambda(x,y)\,dv_g(x,y)\le \int_{X\setminus X_{A_\lambda}} F_\lambda(x,y)\,dv_g(x,y)+A_\lambda\le (C+1)A_\lambda.
\end{equation}
\end{remark}

We can think of \eqref{eq:Wasserstein_x_simplified_into_th} as an estimate of the first moment of $F_\lambda \, dv_g$. The following theorem provides an estimate on $p$-th moments for $p\ge 2$. 

\begin{theorem}\label{th:moments}
Let $\beta\in[2/n,\infty)$, $p\ge2$ and let $F_\lambda$ be the normalised density of eigenfunctions of $\Delta$ on $X$ with eigenvalues $<\lambda$, defined in \eqref{eq:F_lambda_def}. Define
\begin{equation}\label{eq:A_lambda_th_moments_cases}
A_\lambda:=\begin{cases}
\frac{1}{\log(\lambda)}&\text{if }\beta=\frac{2}{n}\\[2mm]
\frac{\log(\lambda)}{\lambda} &\text{if } \beta=\frac{6}{n},\, p=2 \\[2mm]
\lambda^{\max\left\{\frac{1}{2}-\frac{n\beta}{4},-1\right\}}&\text{else } \end{cases}
\end{equation}
%
Then there exists a constant $C>0$ and $\lambda_0>0$ such that for all $\lambda\ge\lambda_0$,  
\begin{equation}\label{eq:moments theorem}
\int_{X} \min\{x^p,1\} F_\lambda(x,y)\,dv_g(x,y)\le C A_\lambda. 
\end{equation}
\end{theorem}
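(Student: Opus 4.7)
The plan is to begin with the layer-cake identity
\begin{equation*}
\int_X \min\{x^p, 1\}\, F_\lambda(x,y)\, dv_g(x,y) = \int_0^1 p\, t^{p-1}\, \mu_\lambda(t)\, dt,
\qquad
\mu_\lambda(t) := \int_{X \setminus X_t} F_\lambda\, dv_g,
\end{equation*}
and to combine it with a two-scale bound on $\mu_\lambda(t)$ that is strictly sharper than what can be extracted from Theorem~\ref{th:main}. Indeed, Markov's inequality applied to Theorem~\ref{th:main} only yields $\mu_\lambda(t) \le C A_\lambda^{(1)}/t$ for $t \ge A_\lambda^{(1)}$, which after layer-cake integration produces the suboptimal rate of order $A_\lambda^{(1)}$; it is the loss of a factor $t$ here that must be recovered by a direct semiclassical argument.

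\textbf{Key semiclassical estimate.} The pivotal step I would prove is, for $t \ge c\lambda^{-1/2}$,
\begin{equation*}
\Tr_{L^2(X, dv_g)}\!\left(\mathbf{1}_{X \setminus X_t}\, \mathbf{1}_{\Delta < \lambda}\right)
\le C\, \lambda^{(n+1)/2}\, \vol_g(X \setminus X_t),
\end{equation*}
which is the expected Weyl phase-space count restricted to $X \setminus X_t$ (note that $\vol_g(X \setminus X_t) \sim t^{-(n\beta-2)/2}$ for $\beta > 2/n$, and $\sim \log(1/t)$ for $\beta = 2/n$). Starting from $\mathbf{1}_{[0,\lambda]}(s) \le e \cdot e^{-s/\lambda}$ reduces this trace to $e\int_{X \setminus X_t} K_{1/\lambda}(z,z)\, dv_g(z)$, where $K_s$ denotes the heat kernel of $\Delta$, and the Gaussian on-diagonal bound $K_{1/\lambda}(z,z) \le C\lambda^{(n+1)/2}$ then closes the argument: it is valid on $X \setminus X_t$ because the sectional curvature of $g$ there is of order $1/x^2 \le 1/t^2 \le \lambda$, placing the heat kernel in its short-time Euclidean regime at time $1/\lambda$. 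Dividing by the Weyl lower bound $N(\lambda) \gtrsim \lambda^{n/2 + n\beta/4}$ (resp.\ $\gtrsim \lambda^{(n+1)/2}\log\lambda$ at $\beta = 2/n$) from \cite{yves} yields
\begin{equation*}
\mu_\lambda(t) \le C\, (\lambda t^2)^{-(n\beta-2)/4}
\qquad\left(\text{resp.\ } \mu_\lambda(t) \le C\, \frac{\log(1/t)}{\log\lambda}\right).
\end{equation*}

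\textbf{Layer-cake optimisation.} I would insert this into the layer cake and split at $t^* := c\lambda^{-1/2}$ (resp.\ $t^* := 1/\lambda$ in the critical case): on $[0, t^*]$ the trivial bound $\mu_\lambda(t) \le 1$ contributes at most $(t^*)^p \le \lambda^{-1}$ (using $p \ge 2$), while the tail on $[t^*, 1]$ reduces to the explicit integral
\begin{equation*}
p\, \lambda^{-(n\beta-2)/4} \int_{t^*}^1 t^{\,p - 1 - (n\beta-2)/2}\, dt.
\end{equation*}
Three regimes emerge that match precisely the three cases of \eqref{eq:A_lambda_th_moments_cases}: if $n\beta < 2p+2$, the integral converges as $t^* \to 0$ and we recover $\lambda^{1/2 - n\beta/4}$; at the resonance $p = (n\beta-2)/2$ -- which for $p = 2$ means exactly $\beta = 6/n$ -- a logarithmic factor appears and yields $\log\lambda/\lambda$; if $n\beta > 2p + 2$, the lower endpoint dominates and gives $\lambda^{-p/2} \le \lambda^{-1}$. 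The critical case $\beta = 2/n$ is handled analogously through the logarithmic version of the bound on $\mu_\lambda$ and produces $A_\lambda = 1/\log\lambda$.

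\textbf{Main obstacle.} The main technical point will be establishing the uniform-in-$t$ heat-kernel diagonal bound $K_{1/\lambda}(z,z) \le C\lambda^{(n+1)/2}$ on $X \setminus X_t$. Since $g$ is geometrically degenerate (curvature of order $1/x^2$, volume form of order $x^{-n\beta/2}$), the standard Gaussian parametrix is valid only on scales $s \lesssim x^2$, which at $s = 1/\lambda$ and $x \sim t \sim 1/\sqrt\lambda$ is precisely marginal. I would handle this via a rescaling/freezing-coefficients argument in the natural local coordinates $(x, x_0^{-\beta/2} y)$ near each base point $z = (x_0, y_0)$, in which $g$ becomes approximately flat on a neighbourhood of diameter comparable to $x_0$; this reduces the bound to the standard flat heat kernel and produces a constant independent of both $t$ and $z \in X \setminus X_t$.
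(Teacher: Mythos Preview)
Your approach via layer cake plus an on-diagonal heat-kernel (equivalently, spectral-function) bound is genuinely different from what the paper does, and the bookkeeping after your ``key semiclassical estimate'' is correct: once you have $\mu_\lambda(t)\le C(\lambda t^2)^{-(n\beta-2)/4}$ for $t\ge c\lambda^{-1/2}$ (and the logarithmic analogue at $\beta=2/n$), the three regimes in the integral $\int_{t^*}^1 t^{\,p-n\beta/2}\,dt$ produce exactly the rates in \eqref{eq:A_lambda_th_moments_cases}, and the piece $[0,t^*]$ contributes only $(c\lambda^{-1/2})^p\le C\lambda^{-1}$. One small slip: in the critical case you still need $t^*\sim\lambda^{-1/2}$ (not $1/\lambda$) for the heat-kernel bound to apply, but this does not affect the outcome.

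The paper does not go through heat kernels at all. Instead it applies the IMS-type localisation of Proposition~\ref{prop:localisation}(i) directly with $\chi=\min\{x^{p/2},\epsilon^{p/2}\}$, which converts the moment into a trace $\Tr\big(\chi(2(1+\delta)^2\lambda-\tilde\Delta)\chi\big)_+$ for the \emph{separable} comparison operator $\tilde\Delta=-\partial_x^2+C_\beta/x^2+x^\beta\Delta_M$; it then splits $X$ dyadically into the shells $X_{2^k\lambda^{-1/2}}\setminus X_{2^{k-1}\lambda^{-1/2}}$ and bounds the contribution of each shell by explicit one-dimensional eigenvalue counting (Proposition~\ref{prop:number_eigenvalues}(iii),(iv)). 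The dyadic sum then collapses to the same integral $\int_{\frac12\lambda^{-1/2}}^\epsilon s^{\,p-n\beta/2}\,ds$ that you obtain, so the two routes converge at the final computation.

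What each buys: your argument is conceptually cleaner and entirely bypasses the auxiliary metric $\tilde g$ and the IMS machinery, but it pushes all the difficulty into the uniform bound $K_{1/\lambda}(z,z)\le C\lambda^{(n+1)/2}$ on $\{x\ge c\lambda^{-1/2}\}$. As you correctly flag, this is exactly borderline (curvature $\sim x^{-2}\sim\lambda$ at time $1/\lambda$), and your freezing/rescaling parametrix must be carried out with a uniform constant across the whole family of base points; this is plausible but is real work. The paper's route trades that analytic input for the concrete structure of $\tilde\Delta$: once one passes to the separable model, everything reduces to counting eigenvalues of the 1D operators $-\partial_x^2+C_\beta/x^2+\mu x^\beta$ on short intervals, which is elementary. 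In short, your proof is valid provided the heat-kernel bound is established rigorously; the paper's proof avoids that step at the price of a slightly longer but more self-contained argument.
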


Both in \eqref{eq:X_ohne_X_A_lambda_into_th} and for $p\ge2$ and $\frac{2}{n}\le\beta\le\frac{6}{n}$ for \eqref{eq:moments theorem}, we \emph{cannot} hope for a much better estimate. We make this precise in Remark \ref{re:optimality} below. 

\subsection*{Estimates for Wasserstein-$p$-distances in the $x$-coordinate}
The estimates \eqref{eq:Wasserstein_x_simplified_into_th} and \eqref{eq:moments theorem} imply estimates on different Wasserstein distances of the probability measure $F_\lambda\,dv_g$ to the uniform probability measure on the boundary $\partial X$ with respect to $G=g_1(x=0)$ after projection on the $x$-coordinate. 

\bigskip

More precisely, denote by $\mu_\lambda$ the probability measure on $[0,1]$ that satisfies for all $a\in(0,1)$
\begin{equation}\label{eq:mu_def1}
\int_{[0,a]}\,d\mu_\lambda=\int_{X_a}F_\lambda\,dv_g\,. 
\end{equation}
In particular, since $\mu_\lambda$ is a probability measure and $F_\lambda$ is absolutely continuous with respect to the Riemannian measure on $X$, we have
\begin{equation}\label{eq:mu_def2}
\mu_\lambda(\{1\})=\int_{X\setminus X_1}F_\lambda\,dv_g\,. 
\end{equation}
We will compare $\mu_\lambda$ to the delta-distribution $\delta_0$ at zero, which is also a probability measure on $[0,1]$. Here we identify $\delta_0$ with the projection on the $x$-coordinate of the uniform probability measure on the boundary of $X$. 

\bigskip

Recall for any $p\ge1$ the definition of the Wasserstein-$p$-distance $W_p(\mu, \nu)$ of two probability measures $\mu$ and $\nu$ on $[0,1]$. We denote by $\Gamma(\mu,\nu)$ the space of all couplings of $\mu$ and $\nu$, that is, measures on $[0,1]\times [0,1]$ with marginals $\mu$ in the first coordinate, and $\nu$ in the second coordinate. 
Then the Wasserstein-$p$-distance of $\mu$ and $\nu$ is defined by
\begin{equation}
W_p(\mu, \nu):=\left(\inf_{\gamma\in \Gamma(\mu,\nu)}\int_{[0,1]\times [0,1]}|x-\tilde x|^p\, d\gamma(z,\tilde z)	\right)^{\frac 1 p}.
\end{equation} 

\bigskip

Thus, applying this definition to $\mu_\lambda$ and $\delta_0$, we find that 
\begin{equation}\label{eq:Wasserstein_distance_in_x}
W_p(\mu_\lambda, \delta_0)^p=\int_{[0,1)}x^p\,d\mu_\lambda+\mu_\lambda(\{1\})=\int_{X}\min\{x^p,1\}F_\lambda(x,y)\,dv_g(x,y). 
\end{equation}

\begin{corollary}[Estimate on the convergence speed in the Wasserstein-$p$-distance]\label{co:Wasserstein}
Denote by $\mu_\lambda$ the probability measure on $[0,1]$ given by the projection of the average density of eigenfunctions on the $x$-coordinate, as defined in \eqref{eq:mu_def1} and \eqref{eq:mu_def2}, and denote by $\delta_0$ the delta-distribution in zero, seen as a probability measure on $[0,1]$. Let $\beta\ge2/n$. Then for $p=1$ or any $p\ge2$, there exists a constant $C>0$ such that for all $\lambda$ sufficiently large, we have the following quantitative estimate for the distance of $\mu_\lambda$ and $\delta_0$ in the Wasserstein-$p$-distance
\begin{equation}\label{eq:Wasserstein_co_est}
W_p(\mu_\lambda, \delta_0)\le C A_\lambda^{1/p}
\end{equation}
for $A_\lambda$ defined in \eqref{eq:A_lambda_th_main_cases} for $p=1$ and $A_\lambda$ defined in \eqref{eq:A_lambda_th_moments_cases} for $p\ge2$.
\end{corollary}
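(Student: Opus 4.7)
The plan is to reduce Corollary \ref{co:Wasserstein} to the two main theorems via the explicit formula for the Wasserstein distance to a Dirac mass. The first step is to observe that $\Gamma(\mu_\lambda, \delta_0)$ contains exactly one element, namely the product measure $\mu_\lambda \otimes \delta_0$, since any coupling must have $\delta_0$ as its second marginal and hence must be concentrated on $[0,1] \times \{0\}$. This immediately gives
\begin{equation*}
W_p(\mu_\lambda, \delta_0)^p = \int_{[0,1] \times \{0\}} |x - \tilde x|^p \, d(\mu_\lambda \otimes \delta_0)(x,\tilde x) = \int_{[0,1]} x^p \, d\mu_\lambda(x).
\end{equation*}

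The second step is to translate this integral against $\mu_\lambda$ into a spatial integral against $F_\lambda \, dv_g$. By the definition of $\mu_\lambda$ in \eqref{eq:mu_def1} and \eqref{eq:mu_def2}, $\mu_\lambda$ is the pushforward of $F_\lambda \, dv_g$ under the map that equals the $x$-coordinate on $X_1$ and takes the value $1$ on $X \setminus X_1$. Integrating the function $x \mapsto x^p$ against this pushforward reproduces the right-hand side of \eqref{eq:Wasserstein_distance_in_x}, so that
\begin{equation*}
W_p(\mu_\lambda, \delta_0)^p = \int_X \min\{x^p, 1\} F_\lambda(x,y) \, dv_g(x,y).
\end{equation*}

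With this identity in hand the estimate \eqref{eq:Wasserstein_co_est} becomes a direct application of the main theorems: for $p = 1$ it follows from \eqref{eq:Wasserstein_x_simplified_into_th} in Theorem \ref{th:main}, and for $p \ge 2$ it follows from \eqref{eq:moments theorem} in Theorem \ref{th:moments} after taking $p$-th roots on both sides. There is no substantive obstacle here; the corollary is essentially a bookkeeping translation of the moment estimates into the language of optimal transport. The only minor point to watch is that $\mu_\lambda$ may carry an atom at the point $1$, accounting for the mass of $F_\lambda \, dv_g$ lying outside the tubular neighbourhood $X_1$, but the truncated cost $\min\{x^p, 1\}$ is precisely tailored so that this atom contributes correctly to both sides of the identity above.
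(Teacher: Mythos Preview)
Your proof is correct and follows essentially the same approach as the paper: the paper's proof is the single line ``It suffices to combine \eqref{eq:Wasserstein_distance_in_x} and the statements of Theorem \ref{th:main} and Theorem \ref{th:moments},'' and you have simply supplied the (straightforward) justification of \eqref{eq:Wasserstein_distance_in_x} via the uniqueness of couplings with a Dirac marginal before invoking the two theorems.
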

\begin{proof}
It suffices to combine \eqref{eq:Wasserstein_distance_in_x} and the statements of Theorem \ref{th:main} and Theorem \ref{th:moments}. 
\end{proof}
Let us remark that one could also obtain estimates similar estimates to \eqref{eq:Wasserstein_co_est} for $p\in(1,2)$ by interpolating between the result from Theorem \ref{th:main} for $p=1$ and Theorem \ref{th:moments} for $p=2$. 

\subsection*{The quasi-isometric Riemannian metric $\tilde g$ and its Laplace-Beltrami operator $\tilde\Delta$}

A key ingredient of the proof of Theorem \ref{th:main} is the comparison of the Laplace-Beltrami operator $\Delta$ of $(X,g)$ with the Laplace-Beltrami operator $\tilde\Delta$ of $(X,\tilde g)$ for a singular Riemannian metric $\tilde g$ with nicer properties. For fixed $\epsilon\in(0,1)$, $\tilde g$ has a separable structure on $X_\epsilon$, and  $\tilde g$ is quasi-isometric to $g$ up to a constant $(1+\delta)$, namely in the matrix-sense, we have
\begin{equation}\label{eq:g_g_tilde_est}
\frac{1}{1 + \delta}\tilde g\le g\le (1 + \delta) \tilde g. 
\end{equation}
Here $\delta>0$ depends on $\epsilon$ and $\delta\to0$ as $\epsilon\to0$. Moreover, by possibly replacing $\delta$ by a small constant times $\delta$, we can choose $\delta\in(0,1)$ such that we also have
\begin{equation}\label{eq:volume_quasi_isometry}
\frac{1}{1 + \delta}dv_{\tilde g}\le dv_{ g}\le (1 + \delta) dv_{\tilde g}
\end{equation}
and 
\begin{equation}
\frac{1}{1 + \delta} \langle f, \tilde\Delta f\rangle_{L^2(X, dv_{\tilde{g}})}
\le \langle f, \Delta f\rangle_{L^2(X, dv_{g})}
\le (1 + \delta)\langle f, \tilde\Delta f\rangle_{L^2(X, dv_{\tilde{g}})} \quad \text{for all } f\in C_c^\infty(X).
\end{equation}

\medskip

For given $\epsilon\in(0,1/3)$, we define the metric $\tilde g$ as follows: Define the metric $G$ on $M$ by $G:=g_1(x=0)$. Here the $(g_1(x))_{x\in[0,1]}$ was the continuous family of smooth, non-degenerate metrics on $M$ from \eqref{eq:g}. Recall that $g$ is such that on $X_1\cong [0,1]\times M$, 
\begin{equation}
g=dx^2+x^{-\beta} g_1(x). 
\end{equation}
We define $\tilde g$ as equal to $g$ on $X\setminus X_{1}$. For a fixed smooth function $\eta:[0,1]\to[0,1]$ with  $\eta \equiv 1$ for $x \leq 2\epsilon$, $\eta \equiv 0$ for $x \geq 3\epsilon$, we define $\tilde g$ on $X_1\cong [0,1]\times M$ by 
\begin{equation}\label{eq:tilde_g_def}
\tilde{g} = dx^2 + x^{-\beta} \left( \eta(x) G +(1 -\eta(x)) g_1(x) \right).
\end{equation}
The estimate \eqref{eq:g_g_tilde_est} with $\delta>0$ small for $\epsilon>0$ small follows from the continuity of the family $(g_1(x))_{x\in[0,1]}$. For the purpose of this paper, it is not necessary to have $\delta$ small. Nevertheless, we keep track of $\delta$, in particular in the proof of Proposition \ref{prop:localisation} below.

\medskip

In the proof of Proposition \ref{prop:localisation} below, we will use the notations $\nabla$ and $\tilde\nabla$. For two functions $F_1$ and $F_2$ on $X$, the expression $\nabla F_1\cdot\nabla F_2$ in local coordinates denotes  the weighted scalar product $\langle \nabla F_1, g^{-1} \nabla F_2 \rangle$ in $\mathbb{R}^{n+1}$, where $g^{-1}$ is identified as a matrix. In particular, 
\begin{equation}
\int_X \nabla F_1\cdot \nabla F_2\, dv_g =\langle F_1,\Delta F_2\rangle_{L^2(X, dv_{g})}.\end{equation}
We have the same definition for $\tilde \nabla$, replacing $g$ by $\tilde g$ everywhere, so 
\begin{equation}
\int_X \tilde\nabla F_1\cdot \tilde\nabla F_2\, dv_{\tilde g} =\langle F_1,\tilde \Delta F_2\rangle_{L^2(X, dv_{\tilde g})}.
\end{equation}

\medskip

After a change of function and measure, see \cite[Section 3.8]{PhD}, the corresponding Laplace-Beltrami operator $\tilde\Delta$ can be written as
\begin{equation}\label{eq:tilde_Delta}
\tilde\Delta = -\partial_x^2+\frac{C_\beta}{x^2}+x^\beta\Delta_M, \quad \text{on } L^2\left([0,\epsilon]\times M, dx\, dv_G(y)\right),\quad C_\beta:=\frac{\beta n}{4}\left(1+\frac{\beta n}{4}\right),
\end{equation}
where $\Delta_M$ denotes the Laplace-Beltrami operator on $M$ with respect to $G$. Thus, $\Delta$ and $\tilde\Delta$ can be seen as a $\beta$-Grushin-type operator plus a potential, where the case $\beta=2$ corresponds to the classical Grushin case. 

\bigskip

Next, let us explain the proof stategy for Theorem \ref{th:main} and for Theorem \ref{th:moments}.

\subsection*{Proof strategy for Theorem \ref{th:main}}
The proof of \eqref{eq:X_ohne_X_A_lambda_into_th} is structured as follows:

{\bf Step 1: Localisation estimate involving a trace of $\tilde\Delta$. }
In Proposition \ref{prop:localisation}(ii) below, we give an estimate for $F_\lambda$, the average density of eigenfunctions of $\Delta$, in domains of the form $X\setminus X_{A_\lambda}$. The estimate only involves a trace in terms of $\tilde \Delta$ and a localisation error. 

{\bf Step 2: Estimate for the number of eigenvalues of $\tilde\Delta$. }
We estimate the trace in terms of $\tilde \Delta$ from step 1 by providing a corresponding estimate on the number of eigenvalues in Proposition \ref{prop:number_eigenvalues}(i),(ii) below.

{\bf Step 3: Estimate of the localisation error and conclusion. }
In order to conclude, it remains to control the localisation error from step. In the critical case $\beta=2/n$, this is straightforwardly possible. In the supercritical case $\beta>2/n$, we use a bootstrap argument to obtain an optimal estimate.

\subsection*{Proof strategy for Theorem \ref{th:moments}}
Let us now explain the proof strategy of Theorem \ref{th:moments}.

{\bf Step 1: Localisation estimate involving a trace of $\tilde\Delta$. }
The first step is very similar to the first step of Theorem \ref{th:main}. Namely, in Proposition \ref{prop:localisation}(i), we express the quantity
\begin{equation}
\int_{X} \min\{x^p,1\} F_\lambda(x,y)\,dv_g(x,y),
\end{equation}
which we want to estimate, in terms of a trace involving the operator $\tilde \Delta$ and a localisation error. 

{\bf Step 2: Estimate for the number of eigenvalues of $\tilde\Delta$. }
In Proposition \ref{prop:number_eigenvalues}(iii) and (iv), we provide estimates for eigenvalues of $\tilde\Delta$ on domains of the form $X_{2B_\lambda} \setminus X_{B_\lambda}$, depending on the value of $B_\lambda$. 

{\bf Step 3: Conclusion using a dyadic decomposition. }
Finally, to conclude, we decompose $X$ into $X_{\frac{1}{2}\lambda^{-1/2}}$, $X\setminus X_\epsilon$, and a union of sets of the form $X_{2B_\lambda} \setminus X_{B_\lambda}$. This allows us to estimate $x^p$ from above and below on each of those sets $X_{2B_\lambda} \setminus X_{B_\lambda}$. Using the estimates from Proposition \ref{prop:number_eigenvalues}(iii) and (iv), we can estimate both the localisation error and the main term with a trace coming from Proposition \ref{prop:localisation}(i), which gives the desired result.

\subsection*{Possible Generalisations} The methods used in this paper are relatively robust. Using the strategy presented here, it should be possible to treat the Laplace-Beltrami operator of other classes of singular Riemannian metrics, or sub-Riemannian Laplacians. The main ingredient that is needed to make the proof work in other settings is the analogue of a localised estimate of the number of eigenvalues of a model operator, as in Proposition \ref{prop:number_eigenvalues}. 

\subsection*{Structure of the paper}
In Section \ref{s:localisation}, we prove Proposition \ref{prop:localisation}. In Section \ref{s:number_eigenvalues}, we prove Proposition \ref{prop:number_eigenvalues}. We conclude the proof of Theorem \ref{th:main} in Section \ref{s:conclusion}. In Section \ref{s:conclusion_moments}, we prove Theorem \ref{th:moments}.

\subsection*{Acknowledgements}
The author would like to thank Yves Colin de Verdière and Emmanuel Trélat for their support, very helpful discussions and suggestions. The author would like to thank the Isaac Newton Institute for Mathematical Sciences, Cambridge, for support and hospitality during the programme Geometric spectral theory and applications, where work on this paper was undertaken. This work was supported by EPSRC grant EP/Z000580/1. She also acknowledges the support by the European Research Council via ERC CoG RAMBAS, Project No. 101044249.

\section{Localisation estimates involving a trace of $\tilde\Delta$}\label{s:localisation}
In this section, we prove Proposition \ref{prop:localisation}. In the proof of Theorem \ref{th:moments}, we will apply Proposition \ref{prop:localisation}(i) for functions $\chi(x,y)=\min\{x^{p/2},1\}$. In the proof of Theorem \ref{th:main}, we will use Proposition \ref{prop:localisation}(ii) for a smooth cut-off functions at scale $A_\lambda$.
\begin{proposition}\label{prop:localisation}
Let $\tilde g$ be as in \eqref{eq:tilde_g_def} an approximation of $g$ with the corresponding $\epsilon>0$ and $\delta>0$, and denote by $\tilde\Delta$ the Laplace-Beltrami operator corresponding to $\tilde g$, see \eqref{eq:tilde_Delta}. 
\begin{itemize}
\item[(i)] Let $\chi:X\to [0, 1]$ be a function that is weakly differentiable. Then we have
\begin{equation}
\begin{split}
\sum_{\substack{\Psi \text{ eigenfunction} \\ \text{of } \Delta \text{ with eigenvalue } < \lambda}} \int_{X }\chi^2 |\Psi|^2 \, dv_g \leq \frac{1}{\lambda} \Tr_{L^{2}(X, dv_{\tilde g})} \left( \chi(2(1+\delta)^2 \lambda - \tilde{\Delta})\chi\right)_{+} \\
\quad\quad + \frac{1}{\lambda} \sum_{\substack{\Psi \text{ eigenfunction} \\ \text{of } \Delta \text{ with eigenvalue } < \lambda}} \int_{X} |\nabla\chi|^2 |\Psi|^2 \, dv_g.
\end{split}
\end{equation}
\item[(ii)]
Let $0 < a < b < \epsilon$ and let $\chi : [0, \epsilon] \to [0, 1]$ 
be weakly differentiable 
with $\chi(x) = 0$ for $x \leq a$ and $\chi(x) = 1$ for $x \geq b$. Then we have
\begin{equation}
\begin{split}
\sum_{\substack{\Psi \text{ eigenfunction} \\ \text{of } \Delta \text{ with eigenvalue } < \lambda}} \int_{X \setminus X_b} |\Psi|^2 \, dv_g \leq \frac{1}{\lambda} \Tr_{L^{2}(X \setminus X_a, dv_{\tilde g})} \left( (2(1+\delta)^2 \lambda - \tilde{\Delta})_{+} \right)\\
\quad\quad + \frac{1}{\lambda} \sum_{\substack{\Psi \text{ eigenfunction} \\ \text{of } \Delta \text{ with eigenvalue } < \lambda}} \int_{X} |\chi'|^2 |\Psi|^2 \, dv_g,
\end{split}
\end{equation}
where we have identified $\chi'(z)$ for $z=(x,y)\in[0,\epsilon]\times M$ with $\chi'(x)$, and $0$ else. Moreover, we take Neumann boundary conditions for $\tilde\Delta$ on $\partial X_a$. 
\end{itemize}

\end{proposition}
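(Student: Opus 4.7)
The plan is to reduce everything to a single IMS-type identity, rewrite it in terms of $\tilde\Delta$ via the quasi-isometry, and then pass to a trace bound via a variational argument; (ii) will follow from (i) by exploiting the specific shape of $\chi$.

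For (i), for each Dirichlet eigenfunction $\Psi$ of $\Delta$ with eigenvalue $\mu<\lambda$, I would test $\Delta\Psi=\mu\Psi$ against $\chi^2\Psi$ and integrate by parts (legitimate because $\chi\Psi$ vanishes on $\partial X$) to obtain the IMS identity
\[
\int_X|\nabla(\chi\Psi)|^2\,dv_g=\mu\int_X\chi^2|\Psi|^2\,dv_g+\int_X|\nabla\chi|^2|\Psi|^2\,dv_g.
\]
Applying the quadratic-form quasi-isometry $\int|\tilde\nabla f|^2\,dv_{\tilde g}\le(1+\delta)\int|\nabla f|^2\,dv_g$ with $f=\chi\Psi$, the volume comparison $dv_g\le(1+\delta)dv_{\tilde g}$, and the bound $\mu<\lambda$, a few lines of rearrangement yield the per-eigenfunction inequality
\[
\langle\chi\Psi,(2(1+\delta)^2\lambda-\tilde\Delta)\chi\Psi\rangle_{\tilde g}\ge(1+\delta)\lambda\int_X\chi^2|\Psi|^2\,dv_g-(1+\delta)\int_X|\nabla\chi|^2|\Psi|^2\,dv_g.
\]

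Summing over $\Psi$ and dividing by $(1+\delta)\lambda$, (i) reduces, with $B:=2(1+\delta)^2\lambda-\tilde\Delta$, to the trace bound
\[
\sum_\Psi\langle\chi\Psi,B\chi\Psi\rangle_{\tilde g}\le(1+\delta)\,\Tr_{L^2(X,dv_{\tilde g})}\bigl((\chi B\chi)_+\bigr).
\]
This is the main obstacle, because $\{\Psi\}$ is orthonormal in $L^2(X,dv_g)$ while the target trace is evaluated in $L^2(X,dv_{\tilde g})$. I would handle the measure mismatch through the unitary $U:L^2(X,dv_g)\to L^2(X,dv_{\tilde g})$, $Uf:=\sqrt{h}\,f$, where $h:=dv_g/dv_{\tilde g}\in[(1+\delta)^{-1},1+\delta]$: then $\tilde\Psi:=U\Psi$ is orthonormal in $L^2(X,dv_{\tilde g})$, $\chi\Psi=h^{-1/2}\chi\tilde\Psi$, and the standard variational inequality for an ONS gives
\[
\sum_\Psi\langle\chi\Psi,B\chi\Psi\rangle_{\tilde g}=\sum_{\tilde\Psi}\langle\tilde\Psi,(h^{-1/2}\chi B\chi h^{-1/2})\tilde\Psi\rangle_{\tilde g}\le\Tr\bigl((h^{-1/2}\chi B\chi h^{-1/2})_+\bigr).
\]
The elementary operator inequality $(SAS)_+\le SA_+S$ for positive self-adjoint $S$ (applied with $S=h^{-1/2}$ and $A=\chi B\chi$) together with $h^{-1}\le 1+\delta$ turns the right-hand side into $(1+\delta)\Tr((\chi B\chi)_+)$, as required.

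For (ii), the assumption $\chi\equiv 1$ on $X\setminus X_b$ gives $\int_{X\setminus X_b}|\Psi|^2\,dv_g\le\int_X\chi^2|\Psi|^2\,dv_g$, so (i) applies. It only remains to replace $\Tr_{L^2(X,dv_{\tilde g})}((\chi B\chi)_+)$ by $\Tr_{L^2(X\setminus X_a,dv_{\tilde g})}(B^N_+)$, where $B^N:=2(1+\delta)^2\lambda-\tilde\Delta^N$ carries the Neumann condition at $\{x=a\}$. Since $\chi\equiv 0$ on $X_a$, the quadratic form of $\chi B\chi$ vanishes on $L^2(X_a,dv_{\tilde g})$ and on $L^2(X\setminus X_a,dv_{\tilde g})$ coincides with that of $\chi B^N\chi$, yielding $\Tr((\chi B\chi)_+)=\Tr_{L^2(X\setminus X_a,dv_{\tilde g})}((\chi B^N\chi)_+)$. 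Finally, $0\le\chi\le 1$ gives the chain $(\chi B^N\chi)_+\le\chi B^N_+\chi$ and $\Tr(\chi B^N_+\chi)=\Tr(\chi^2 B^N_+)\le\Tr(B^N_+)$, eliminating $\chi$ from inside the trace and completing the proof.
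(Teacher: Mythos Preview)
Your overall route is the paper's: IMS identity, quasi-isometry to pass from $(g,\Delta)$ to $(\tilde g,\tilde\Delta)$, and a variational trace bound. The paper handles the measure mismatch by proving directly that $\sum_\Psi|\Psi\rangle\langle\Psi|\le(1+\delta)$ as an operator on $L^2(X,dv_{\tilde g})$ (respectively $\sum_\Psi|\chi\Psi\rangle\langle\chi\Psi|\le(1+\delta)\,1_{X\setminus X_a}$ for (ii)), whereas you conjugate by the unitary $f\mapsto\sqrt{h}\,f$; these are equivalent manoeuvres.

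There is, however, one step that fails as written. The ``elementary operator inequality'' $(SAS)_+\le SA_+S$ for positive self-adjoint $S$ is \emph{false}. Take $A=\begin{pmatrix}1&0\\0&-1\end{pmatrix}$ and $S=\begin{pmatrix}1&t\\t&1\end{pmatrix}$ with $0<t<1$: then $SAS=\begin{pmatrix}1-t^2&0\\0&-(1-t^2)\end{pmatrix}$, so $(SAS)_+=\begin{pmatrix}1-t^2&0\\0&0\end{pmatrix}$, while $SA_+S=\begin{pmatrix}1&t\\t&t^2\end{pmatrix}$, and $SA_+S-(SAS)_+=\begin{pmatrix}t^2&t\\t&t^2\end{pmatrix}$ has determinant $t^2(t^2-1)<0$. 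The same objection applies to the step $(\chi B^N\chi)_+\le\chi B^N_+\chi$ in your argument for (ii).

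What you actually need---and what is true---is only the \emph{trace} inequality $\Tr\big((SAS)_+\big)\le\Tr\big(SA_+S\big)$. This follows immediately: $A\le A_+$ gives the genuine operator bound $SAS\le SA_+S$, and since $SA_+S\ge0$, min--max yields $\lambda_k(SAS)\le\lambda_k(SA_+S)$ for every $k$, whence
\[
\Tr\big((SAS)_+\big)=\sum_{\lambda_k(SAS)>0}\lambda_k(SAS)\le\sum_k\lambda_k(SA_+S)=\Tr(SA_+S).
\]
With this correction (applied twice, once with $S=h^{-1/2}$ in (i) and once with $S=\chi$ in (ii)), your argument goes through and yields exactly the stated bounds.
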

\begin{proof}
Let $\Psi$ be an eigenfunction of $\Delta$ with eigenvalue $\mu \in [0, \lambda]$. Then
\begin{equation}\label{eq:eigenfunction}
\Psi (2 \lambda - \Delta) \Psi = (2 \lambda - \mu) |\Psi|^2 \geq \lambda |\Psi|^2.
\end{equation}

By \eqref{eq:eigenfunction},
\begin{equation}\label{eq:proof_step1}
\int_{X } \chi^2|\Psi|^2 \, dv_g \leq \frac{1}{\lambda} \int_{X} \chi^2 \Psi (2 \lambda - \Delta) \Psi \, dv_g = \frac{1}{\lambda} \left( 2 \lambda \int_{X} |\chi \Psi|^2 \, dv_g - \int_{X} \nabla (\chi^2 \Psi) \cdot \nabla \Psi \, dv_g \right).
\end{equation}

Here, $\nabla F_1 \cdot \nabla F_2$ in local coordinates corresponds to taking the weighted scalar product $\langle \nabla F_1, g^{-1} \nabla F_2 \rangle$ in $\mathbb{R}^n$, where $g^{-1}$ is identified as a matrix. By the product rule, we have
\begin{equation}\label{eq:gradient_product}
\begin{split}
\int_{X} \nabla (\chi^2 \Psi) \cdot \nabla \Psi \, dv_g = \int_{X} \chi \nabla (\chi \Psi) \cdot \nabla \Psi \, dv_g + \int_{X} \chi \Psi (\nabla \chi) \cdot \nabla \Psi \, dv_g\\
= \int_{X} |\nabla (\chi \Psi)|^2 \, dv_g - \int_{X} \Psi (\nabla \chi) \cdot \nabla (\chi \Psi) \, dv_g + \int_{X} \chi \Psi (\nabla \chi) \cdot \nabla \Psi \, dv_g.
\end{split}
\end{equation}
Since $\nabla (\chi \Psi)= \Psi\nabla\chi+\chi\nabla\Psi$, 
we get
\begin{equation}
\int_{X} \nabla (\chi^2 \Psi) \cdot \nabla \Psi \, dv_g=\int_{X} |\nabla (\chi \Psi)|^2 \, dv_g - \int_{X} |\nabla\chi|^2 |\Psi|^2 \, dv_g.
\end{equation}
Note that this corresponds to the IMS formula on Riemannian manifolds. Thus,
\begin{equation}\label{eq:ims_formula}
\int_{X} \chi^2|\Psi|^2 \, dv_g \leq \frac{1}{\lambda} \left( 2 \lambda \int_{X} |\chi \Psi|^2 \, dv_g - \int_{X} |\nabla (\chi \Psi)|^2 \, dv_g + \int_{X} |\nabla\chi|^2 |\Psi|^2 \, dv_g \right).
\end{equation}

Using the quasi-isometry of $g$ and $\tilde{g}$, we have
\begin{equation}\label{eq:chi_Psi_ohne_Tr}
\int_{X} \chi^2 |\Psi|^2 \, dv_g \leq \frac{1}{\lambda} (1 + \delta) \left( 2 \lambda \int_{X} |\chi \Psi|^2 \, dv_{\tilde{g}} - \frac{1}{(1 + \delta)^2} \int_{X} |\tilde{\nabla} (\chi \Psi)|^2 \, dv_{\tilde{g}} \right) + \frac{1}{\lambda} \int_{X} |\nabla\chi|^2 |\Psi|^2 \, dv_g.
\end{equation} 
Later, we will take the sum over an orthonormal family in $L^2(X, dv_g)$ of eigenfunctions $\Psi$ of $\Delta$. 
From this point on, the proof of $(i)$ and $(ii)$ is slightly different.

{\bf End of the proof of $(i)$. }
Using the bra-ket notation, we claim that
\begin{equation}\label{eq:projection_estimate}
0 \leq \sum_{\Psi} | \Psi \rangle \langle  \Psi| \leq (1 + \delta)  \quad \text{on } L^2(X, dv_{\tilde{g}}),
\end{equation}
that is, the sum of the orthogonal projections onto $\Psi$ in $L^2(X, dv_{\tilde{g}})$ is less than or equal in an operator sense to $1+\delta$ times the identity operator. Note that here, the orthogonal projection onto $\Psi$ in $L^2(X, dv_{\tilde{g}})$ is different from the orthogonal projection onto $\Psi$ in $L^2(X, dv_{g})$. 
To see \eqref{eq:projection_estimate}, let $\Phi \in L^2(X, dv_{\tilde{g}})$. Then
\begin{equation}
\left\langle \Phi, \sum_{\Psi} | \Psi \rangle \langle \Psi | \Phi  \right\rangle_{L^2(X, dv_{\tilde{g}})} = \sum_{\Psi} |\langle \Phi,  \Psi \rangle_{L^2(X, dv_{\tilde{g}})}|^2 = \sum_{\Psi} \left| \int_{X}  \overline{ \Phi}\Psi \, dv_{\tilde{g}} \right|^2.
\end{equation}
Denoting by $\frac{dv_{\tilde{g}}}{dv_g}$ the Radon-Nikodoym density of the measure $v_{\tilde{g}}$ with respect to $v_g$, we have
\begin{equation}
\sum_{\Psi} \left| \int_{X}  \overline{\Phi}\Psi \, dv_{\tilde{g}} \right|^2=
\sum_{\Psi} \left| \int_{X}  \overline{\frac{dv_{\tilde{g}}}{dv_g} \Phi}\Psi \, dv_{{g}} \right|^2=\sum_{\Psi} \left|\left\langle \frac{dv_{\tilde{g}}}{dv_g}\Phi,  \Psi \right\rangle_{L^2(X, dv_{g})}\right|^2.
\end{equation}
Next, we use that sum of the projections onto the $\Psi$ is an operator on $L^2(X, dv_{g})$ that is bounded by $1$, so 
\begin{equation}
\begin{split}
\sum_{\Psi} \left|\left\langle \frac{dv_{\tilde{g}}}{dv_g}\Phi,  \Psi \right\rangle_{L^2(X, dv_{g})}\right|^2\le \left\| \left( \frac{dv_{\tilde{g}}}{dv_g} \right)   \Phi\right\|^2_{L^2(X, dv_{g})}
=\int_{X} \left| \left( \frac{dv_{\tilde{g}}}{dv_g} \right)  \Phi\right|^2 \, dv_g\\
\le(1+\delta)\int_{X} \left|    \Phi\right|^2 \, dv_{\tilde{g}}\le(1+\delta)\int_{X} \left|   \Phi\right|^2 \, dv_{\tilde{g}}.
\end{split}
\end{equation}
Here we used that by the quasi-isometry, see \eqref{eq:volume_quasi_isometry}, $\frac{dv_{\tilde{g}}}{dv_g}\le 1+\delta$. This proves \eqref{eq:projection_estimate}.

Taking the sum over all $\Psi$ in \eqref{eq:chi_Psi_ohne_Tr} yields
\begin{align*}
\begin{split}
\sum_{\Psi} \int_{X } \chi^2|\Psi|^2 \, dv_g &\leq \frac{1}{\lambda} (1 + \delta)^2 \Tr_{L^2(X , dv_{\tilde{g}})} \left(\chi \left(2 \lambda - \frac{1}{(1+\delta)^2}\tilde\Delta\right) \chi\right)_+ + \frac{1}{\lambda} \sum_{\Psi} \int_{X} |\nabla\chi|^2 |\Psi|^2 \, dv_g\\
&=\frac{1}{\lambda}  \Tr_{L^2(X, dv_{\tilde{g}})} \left( \chi\left(2(1 + \delta)^2 \lambda - \tilde\Delta\right)\chi \right)_+ + \frac{1}{\lambda} \sum_{\Psi} \int_{X} |\nabla\chi|^2 |\Psi|^2 \, dv_g.
\end{split}
\end{align*}

{\bf End of the proof of $(ii)$. }
Using the bra-ket notation, we claim that
\begin{equation}\label{eq:projection_estimate_chi_psi}
0 \leq \sum_{\Psi} |\chi \Psi \rangle \langle \chi \Psi| \leq (1 + \delta) 1_{X \setminus X_a} \quad \text{on } L^2(X, dv_{\tilde{g}}),
\end{equation}
that is, the sum of the orthogonal projections onto $\chi \Psi$ in $L^2(X, dv_{\tilde{g}})$ is less than or equal in an operator sense to $1+\delta$ times the multiplication operator by $1_{X \setminus X_a}$. Note that here, the orthogonal projection onto $\chi \Psi$ in $L^2(X, dv_{\tilde{g}})$ is different from the orthogonal projection onto $\chi \Psi$ in $L^2(X, dv_{g})$. 
To see \eqref{eq:projection_estimate_chi_psi}, let $\Phi \in L^2(X \setminus X_a, dv_{\tilde{g}})$. Then
\begin{equation}
\left\langle \Phi, \sum_{\Psi} |\chi \Psi \rangle \langle \chi \Psi | \Phi  \right\rangle_{L^2(X, dv_{\tilde{g}})} = \sum_{\Psi} |\langle \chi\Phi,  \Psi \rangle_{L^2(X, dv_{\tilde{g}})}|^2 = \sum_{\Psi} \left| \int_{X}  \overline{\chi \Phi}\Psi \, dv_{\tilde{g}} \right|^2.
\end{equation}
Denoting by $\frac{dv_{\tilde{g}}}{dv_g}$ the Radon-Nikodoym density of the measure $v_{\tilde{g}}$ with respect to $v_g$, we have
\begin{equation}
\sum_{\Psi} \left| \int_{X}  \overline{\chi \Phi}\Psi \, dv_{\tilde{g}} \right|^2=
\sum_{\Psi} \left| \int_{X}  \overline{\frac{dv_{\tilde{g}}}{dv_g}\chi \Phi}\Psi \, dv_{{g}} \right|^2=\sum_{\Psi} \left|\left\langle \frac{dv_{\tilde{g}}}{dv_g}\chi\Phi,  \Psi \right\rangle_{L^2(X, dv_{g})}\right|^2.
\end{equation}
Next, we use that sum of the projections onto the $\Psi$ is an operator on $L^2(X, dv_{g})$ that is bounded by $1$, so 
\begin{equation}
\begin{split}
\sum_{\Psi} \left|\left\langle \frac{dv_{\tilde{g}}}{dv_g}\chi\Phi,  \Psi \right\rangle_{L^2(X, dv_{g})}\right|^2\le \left\| \left( \frac{dv_{\tilde{g}}}{dv_g} \right)  \chi \Phi\right\|^2_{L^2(X, dv_{g})}
=\int_{X} \left| \left( \frac{dv_{\tilde{g}}}{dv_g} \right)  \chi \Phi\right|^2 \, dv_g\\
\le(1+\delta)\int_{X} \left|   \chi \Phi\right|^2 \, dv_{\tilde{g}}\le(1+\delta)\int_{X\setminus X_a} \left|   \Phi\right|^2 \, dv_{\tilde{g}}.
\end{split}
\end{equation}
Here we used that by the quasi-isometry, see \eqref{eq:volume_quasi_isometry}, $\frac{dv_{\tilde{g}}}{dv_g}\le 1+\delta$. This proves \eqref{eq:projection_estimate_chi_psi}.

Taking the sum over all $\Psi$ in \eqref{eq:chi_Psi_ohne_Tr} yields
\begin{align*}
\begin{split}
\sum_{\Psi} \int_{X \setminus X_b} |\Psi|^2 \, dv_g &\leq \frac{1}{\lambda} (1 + \delta)^2 \Tr_{L^2(X \setminus X_a, dv_{\tilde{g}})} \left( \left(2 \lambda - \frac{1}{(1+\delta)^2}\tilde\Delta\right)_+ \right) + \frac{1}{\lambda} \sum_{\Psi} \int_{X} |\chi'|^2 |\Psi|^2 \, dv_g\\
&=\frac{1}{\lambda}  \Tr_{L^2(X \setminus X_a, dv_{\tilde{g}})} \left( \left(2(1 + \delta)^2 \lambda - \tilde\Delta\right)_+ \right) + \frac{1}{\lambda} \sum_{\Psi} \int_{X} |\chi'|^2 |\Psi|^2 \, dv_g.
\end{split}
\end{align*}
\end{proof}

\section{Estimates for the number of eigenvalues of $\tilde\Delta$}\label{s:number_eigenvalues}

In this section, we prove optimal estimates for the number of eigenvalues of the separable operator $\tilde\Delta$ in different subsets near the boundary. 

\begin{proposition}\label{prop:number_eigenvalues}
Let $\epsilon\in(0,1]$ and let $\tilde\Delta$ be the Laplace-Beltrami operator corresponding to $\tilde g$, see \eqref{eq:tilde_Delta}. Let $\beta\in[2/n,\infty)$. 
There exists a constant $C>0$ independent of $\lambda$ such that for all $B_{\lambda}$ with 
\begin{equation}
\frac{1}{2}\lambda^{-1/2}\le B_{\lambda}\le\frac{\epsilon}{2}
\end{equation}
and 
for all $\lambda\ge 1$, we have the following estimates: 
\begin{itemize}
\item[(i)] If $\beta>2/n$, then for the operator $\tilde\Delta$ on $L^2(X \setminus X_{B_\lambda}, dv_{\tilde{g}})$ with Neumann boundary conditions at $\partial\left(X \setminus X_{B_\lambda}\right)$, the number of its eigenvalues below $\lambda$, denoted by $N_{\tilde\Delta,X \setminus X_{B_\lambda}}(\lambda)$, satisfies 
\begin{equation}\label{eq:eigenvalue_est_super}
N_{\tilde\Delta,X \setminus X_{B_\lambda}}(\lambda)\le C\lambda^{\frac{n+1}{2}}B_{\lambda}^{-\frac{\beta n}{2}\left(1-\frac{2}{\beta n}\right)}.
\end{equation}
\item[(ii)] If $\beta=2/n$, then for the operator $\tilde\Delta$ on $L^2(X \setminus X_{B_\lambda}, dv_{\tilde{g}})$ with Neumann boundary conditions at $\partial\left(X \setminus X_{B_\lambda}\right)$, we have
\begin{equation}\label{eq:eigenvalue_est_crit}
N_{\tilde\Delta,X \setminus X_{B_\lambda}}(\lambda)\le C\lambda^{\frac{n+1}{2}}\left(1+|\log(B_{\lambda})|\right).
\end{equation}
\item[(iii)] If $\beta\ge2/n$, then for the operator $\tilde\Delta$ on $L^2(X_{2B_\lambda} \setminus X_{B_\lambda}, dv_{\tilde{g}})$ with Neumann boundary conditions at $\partial\left(X_{2B_\lambda} \setminus X_{B_\lambda}\right)$, we have 
\begin{equation}
N_{\tilde\Delta,X_{2B_\lambda} \setminus X_{B_\lambda}}(\lambda)\le C\lambda^{\frac{n+1}{2}}B_{\lambda}^{-\frac{\beta n}{2}\left(1-\frac{2}{\beta n}\right)}.
\end{equation}
\item[(iv)] If $\beta\ge2/n$, then for the operator $\tilde\Delta$ on $L^2(X_{\frac{1}{2}\lambda^{-1/2}}, dv_{\tilde{g}})$ with Neumann boundary conditions at $\partial X_{\frac{1}{2}\lambda^{-1/2}}$, we have 
\begin{equation}
N_{\tilde\Delta,X_{\frac{1}{2}\lambda^{-1/2}} }(3\lambda)=0.
\end{equation}
\end{itemize}
Furthermore, the result is optimal in the sense that if in addition, for some constant $\hat c>0$ large enough depending on $n$ and $\beta$, and for some $\tilde\epsilon>0$ small enough, we have
\begin{equation}
\frac{1}{2}\lambda^{-1/2}\le B_{\lambda}\le\frac{\epsilon}{2}, 
\end{equation}
then for all $\lambda>0$ large enough, we have
\item[(v)] if $\beta>2/n$, then
\begin{equation}\label{eq:eigenvalue_est_super_opt}
N^D_{\tilde\Delta,X \setminus X_{B_\lambda}}(\lambda)\ge C\lambda^{\frac{n+1}{2}}B_{\lambda}^{-\frac{\beta n}{2}\left(1-\frac{2}{\beta n}\right)}.
\end{equation}
Here the $D$ stands for Dirichlet boundary conditions on $\partial\left(X \setminus X_{B_\lambda}\right)$. 
\item[(vi)] if $\beta=2/n$, then
\begin{equation}\label{eq:eigenvalue_est_crit_opt}
N^D_{\tilde\Delta,X \setminus X_{B_\lambda}}(\lambda)\ge C\lambda^{\frac{n+1}{2}}\left(1+|\log(B_{\lambda})|\right).
\end{equation}
Here the $D$ stands for Dirichlet boundary conditions on $\partial\left(X \setminus X_{B_\lambda}\right)$. 
\item[(vii)] If $\beta\ge2/n$, then for the operator $\tilde\Delta$ on $L^2(X_{2B_\lambda} \setminus X_{B_\lambda}, dv_{\tilde{g}})$ with Neumann boundary conditions at $\partial\left(X_{2B_\lambda} \setminus X_{B_\lambda}\right)$, we have 
\begin{equation}
N^D_{\tilde\Delta,X_{2B_\lambda} \setminus X_{B_\lambda}}(\lambda)\ge C\lambda^{\frac{n+1}{2}}B_{\lambda}^{-\frac{\beta n}{2}\left(1-\frac{2}{\beta n}\right)}.
\end{equation}
Here the $D$ stands for Dirichlet boundary conditions on $\partial\left(X_{2B_\lambda} \setminus X_{B_\lambda}\right)$. 
\end{proposition}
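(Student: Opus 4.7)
The proof hinges on the separable form $\tilde\Delta=-\partial_x^2+C_\beta/x^2+x^\beta\Delta_M$ on $[0,\epsilon]\times M$ with measure $dx\,dv_G$. Fix an $L^2(M,dv_G)$-orthonormal basis $\{\varphi_k\}_{k\ge 0}$ of eigenfunctions of $\Delta_M$ with eigenvalues $0=\mu_0\le\mu_1\le\dots$, so Weyl's law gives $N_M(\Lambda):=\#\{k:\mu_k\le\Lambda\}\asymp\Lambda^{n/2}$ for $\Lambda\ge 1$. On any product domain $I\times M\subset[0,\epsilon]\times M$, under Dirichlet or Neumann conditions at $\partial I$, one has $\tilde\Delta=\bigoplus_k H_k$ with
\begin{equation*}
H_k:=-\partial_x^2+V_k,\qquad V_k(x):=\frac{C_\beta}{x^2}+\mu_k x^\beta,
\end{equation*}
on $L^2(I,dx)$ with the same conditions. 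Hence $N_{\tilde\Delta}(\lambda)=\sum_k N_{H_k}(\lambda)$, and the whole proposition reduces to 1D Schr\"odinger counting estimates summed against the spectrum of $\Delta_M$.

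\textbf{Upper bounds (i)--(iv).} For (i) and (ii), impose an extra Neumann condition at $\partial X_\epsilon$; this can only increase $N$ and splits the problem into $[B_\lambda,\epsilon]\times M$ and the non-singular piece $X\setminus X_\epsilon$. The latter is a compact Riemannian manifold with boundary, contributing $O(\lambda^{(n+1)/2})$ by the classical Weyl law, absorbed by the target. On the product piece, apply the standard 1D Neumann Weyl upper bound $N_{H_k}(\lambda)\le\mathbf{1}_{\{\min_I V_k\le\lambda\}}+\pi^{-1}\int_I(\lambda-V_k)_+^{1/2}\,dx$, sum in $k$, swap with the $x$-integral, and invoke the pointwise Abel-summation estimate
\begin{equation*}
\sum_k\bigl(\lambda-C_\beta/x^2-\mu_k x^\beta\bigr)_+^{1/2}\le C\lambda^{(n+1)/2}x^{-\beta n/2},
\end{equation*}
obtained by integrating $N_M(\mu)\le C\mu^{n/2}$ against the derivative of $\mu\mapsto(E-x^\beta\mu)_+^{1/2}$ with $E=\lambda-C_\beta/x^2\le\lambda$. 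The main term becomes $C\lambda^{(n+1)/2}\int_{B_\lambda}^{\epsilon}x^{-\beta n/2}\,dx$, which evaluates to $CB_\lambda^{1-\beta n/2}$ when $\beta>2/n$ (giving (i)) and $C(1+|\log B_\lambda|)$ when $\beta=2/n$ (giving (ii)). The indicator term sums to $\#\{k:\mu_k\le C\lambda/B_\lambda^\beta\}\le C\lambda^{n/2}B_\lambda^{-\beta n/2}$, absorbed by the main term precisely because $B_\lambda\ge\tfrac12\lambda^{-1/2}$ is equivalent to $\lambda^{n/2}B_\lambda^{-\beta n/2}\le C\lambda^{(n+1)/2}B_\lambda^{-(\beta n-2)/2}$. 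Part (iii) is the identical calculation on the band $[B_\lambda,2B_\lambda]$. Part (iv) is a direct Hardy observation: $\beta n\ge 2$ forces $C_\beta=\tfrac{\beta n}{4}(1+\tfrac{\beta n}{4})\ge\tfrac34$, so $C_\beta/x^2\ge 3\lambda$ on $[0,\tfrac12\lambda^{-1/2}]$, yielding $\tilde\Delta\ge 3\lambda$ as a quadratic form.

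\textbf{Lower bounds (v)--(vii) and main obstacle.} By min-max, restricting to trial functions supported in $[B_\lambda+\delta_1,\epsilon-\delta_2]\times M$ with $\delta_i$ a small multiple of $B_\lambda$, one gets $N^D_{\tilde\Delta,X\setminus X_{B_\lambda}}(\lambda)\ge \sum_k N_{H_k}^{DD}(\lambda)$, where $N_{H_k}^{DD}$ denotes Dirichlet counting on the shrunken interval. Apply the matching 1D Dirichlet lower bound $N_{H_k}^{DD}(\lambda)\ge\pi^{-1}\int(\lambda-V_k)_+^{1/2}\,dx-1$ (constructed via orthogonal WKB-type trial functions on the classically allowed region), together with the lower Abel-summation estimate $\sum_k(\lambda-C_\beta/x^2-\mu_k x^\beta)_+^{1/2}\ge c\lambda^{(n+1)/2}x^{-\beta n/2}$, valid for $x$ with $C_\beta/x^2\le\lambda/2$ thanks to the lower Weyl bound $N_M(\Lambda)\ge c\Lambda^{n/2}$ for $\Lambda\ge\Lambda_0$. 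The ``$-1$'' offsets and the slivers of $I$ near $V_k=\lambda$ are absorbed by taking $\hat c$ large enough to enforce $C_\beta/B_\lambda^2\le\lambda/2$ and $\tilde\epsilon$ small enough to keep everything in the tubular regime, which is precisely what the quantitative hypotheses in (v)--(vii) are designed to provide. The main technical obstacle is proving the two-sided pointwise asymptotic $\sum_k(\lambda-C_\beta/x^2-\mu_k x^\beta)_+^{1/2}\asymp\lambda^{(n+1)/2}x^{-\beta n/2}$ uniformly on $[B_\lambda,\epsilon]$: the Abel summation against $N_M(\Lambda)=c_n\Lambda^{n/2}+O(\Lambda^{(n-1)/2})$ must be carried out with care both in the near-threshold region $\mu\sim E/x^\beta$ (where the integrand is singular) and, for the lower bound, in the inner region $x\sim\lambda^{-1/2}$, where the Hardy term almost saturates $\lambda$ and dictates the lower threshold on $B_\lambda$.
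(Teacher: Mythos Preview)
Your proof is correct and follows a genuinely different route from the paper's. Both begin by Neumann-bracketing off $X\setminus X_\epsilon$ and exploiting the separation $\tilde\Delta=\bigoplus_k H_k$, but they diverge in the 1D analysis. You use the semiclassical phase-space volume directly: bound each $N_{H_k}(\lambda)$ by $\mathbf{1}+\pi^{-1}\int(\lambda-V_k)_+^{1/2}\,dx$, swap the $k$-sum with the $x$-integral, collapse $\sum_k(\lambda-C_\beta/x^2-\mu_k x^\beta)_+^{1/2}$ to $C\lambda^{(n+1)/2}x^{-\beta n/2}$ pointwise via Abel summation against Weyl on $M$, and then integrate in $x$. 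The paper never writes the phase-space integral: for (i)--(ii) it splits the $k$-sum at $\mu_k\sim\lambda$, treats small $k$ by the crude $P_\mu\ge-\partial_x^2$, and for large $k$ rescales $P_\mu$ on $[a,b]$ to $\mu^{2/(2+\beta)}P_1$ on $[\mu^{1/(2+\beta)}a,\mu^{1/(2+\beta)}b]$ and imports a counting bound $N_{P_1}(\omega)\le C\omega^{1/2+1/\beta}$ from an external reference, summing the resulting $j^{-2/(n\beta)}$ explicitly; for (iii) it uses only $P_\mu\ge-\partial_x^2$ on the short band. The lower bounds (v)--(vii) follow the same dichotomy: you run the matching phase-space lower bound with the ``$-1$'' offsets, whereas the paper drops the Hardy term (this is exactly where $\hat c$ large enters), rescales to $\hat P_1$, and cites a lower counting bound for it; for (vii) the paper argues even more directly by comparison with $-\partial_x^2$ on an interval of length $B_\lambda$. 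Your approach is more self-contained and makes the role of the hypothesis $B_\lambda\gtrsim\lambda^{-1/2}$ very transparent---it is precisely the condition that lets the indicator/``$-1$'' contributions be absorbed into the phase-space main term---while the paper's approach is more modular, reusing pre-established 1D lemmas. Part (iv) is identical in both.
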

\begin{proof}
First note that we have
\begin{equation}
N_{\tilde\Delta,X \setminus X_{B_\lambda}}(\lambda)\le N_{\tilde\Delta,X_\epsilon \setminus X_{B_\lambda}}(\lambda) +N_{\tilde\Delta,X \setminus X_{\epsilon}}(\lambda),
\end{equation} 
where we put Neumann boundary conditions on all the boundaries. Since the metric $\tilde g $ is regular in the interior of $X$, we have for all $\lambda\ge 1$
\begin{equation}
N_{\tilde\Delta,X \setminus X_{\epsilon}}(\lambda)\le C\lambda^{\frac{n+1}{2}}
\end{equation}
for a constant $C$ that only depends on $\epsilon$, $n$ and $\beta$. Note that this is a subleading term for both \eqref{eq:eigenvalue_est_super} and \eqref{eq:eigenvalue_est_crit}.

\medskip

Next, let us estimate $N_{\tilde\Delta,X_\epsilon \setminus X_{B_\lambda}}(\lambda)$. On $X_\epsilon$, and hence also on $X_\epsilon \setminus X_{B_\lambda}$, the metric $\tilde g$ reads 
\begin{equation}
\tilde g = \mathrm{d}x^2+x^{-\beta} G
\end{equation}
and after a change of function and measure, see \cite[Section 3.8]{PhD}, the corresponding Laplace-Beltrami operator $\tilde\Delta$ can be written as
\begin{equation}
\tilde\Delta = -\partial_x^2+\frac{C_\beta}{x^2}+x^\beta\Delta_M, \quad \text{on } L^2\left([0,\epsilon]\times M, dx\, dv_G(y)\right),\quad C_\beta:=\frac{\beta n}{4}\left(1+\frac{\beta n}{4}\right),
\end{equation}
where $\Delta_M$ denotes the Laplace-Beltrami operator on $M$ with respect to $G$. By \eqref{eq:tilde_Delta}, a basis of eigenfunctions of $\tilde\Delta$ is such that the eigenfunctions factorise. More precisely, if $\Phi$ is an eigenfunction in this basis of $\tilde\Delta$ with eigenvalue $\alpha$, then it can be written as 
\begin{equation}
\Phi(x,y)=\phi(x)\psi(y),
\end{equation}
where $\psi$ is an eigenfunction with some eigenvalue $\mu$ of $\Delta_M$, and $\phi$ is an eigenfunction of the operator
\begin{equation}
P_\mu :=-\partial_x^2+\frac{C_\beta}{x^2}+\mu x^\beta \quad \text{on } [B_\lambda, \epsilon].
\end{equation}
We denote the eigenvalues of $\Delta_M$ by $(\mu_j)_{j\in\N}$. By the Weyl law on $M$, we know that $\mu_j\sim c_M j^{2/n}$ as $n\to\infty$, where $c_M$ is an explicit constant that only depends on $v_G(M)$ and $n$. In the following, to simplify the notation, we will assume that $c_M=1$. Let us denote by $N_{\mu,[a,b]}(\lambda)$ the number of eigenvalues $<\lambda$ of $P_\mu$ on the interval $[a,b]$ with Neumann boundary conditions at both endpoints. Then we get
\begin{equation}
N_{\tilde\Delta,X_\epsilon \setminus X_{B_\lambda}}(\lambda)=\sum_{j\in\N}N_{\mu_j,[B_\lambda, \epsilon]}(\lambda).
\end{equation}
Also note that for any $\mu\ge 0$, we have $P_\mu \ge-\partial_x^2$, and by estimating the number of eigenvalues of $-\partial_x^2$ on intervals, we get for any $j\in\N$
\begin{equation}
N_{\mu_j,[B_\lambda, \epsilon]}(\lambda)\le 1+\frac{\epsilon}{\pi}\sqrt{\lambda}.
\end{equation}
In particular, 
\begin{equation}
\sum_{j=1}^{\lambda^{n/2}}N_{\mu_j,[B_\lambda, \epsilon]}(\lambda)\le C\lambda^{(n+1)/2}, 
\end{equation}
which is less than or equal to the right-hand side of the desired estimate. 

For the part of the sum corresponding to $j\ge\lambda^{n/2}$, we may replace $\mu_j$ by $j^{2/n}$ up to a constant factor in the estimates, which will not matter. So it remains to estimate
\begin{equation}
\sum_{j=\lambda^{n/2}}^{\infty}N_{j^{2/n},[B_\lambda, \epsilon]}(\lambda).
\end{equation} 
Note that if $j\ge\lambda^{n/2}B_\lambda^{-\beta n/2}$, then $j^{2/n}B_\lambda\ge\lambda$, and thus, $P_{j^{2/n}}\ge\lambda$ on $[B_\lambda, \epsilon]$. That is, for $j\ge\lambda^{n/2}B_\lambda^{-\beta n/2}$, we have $N_{j^{2/n},[B_\lambda, \epsilon]}(\lambda)=0$. Hence, we only need to estimate 
\begin{equation}
\sum_{j=\lambda^{n/2}}^{\lambda^{n/2}B_\lambda^{-\beta n/2}}N_{j^{2/n},[B_\lambda, \epsilon]}(\lambda).
\end{equation} 
Using the unitary equivalence of $P_\mu$ on an interval $[a,b]$ and $\mu^{2/(2+\beta)}P_1$ on $\left[\mu^{1/(2+\beta)}a,\mu^{1/(2+\beta)}b\right]$, we have
\begin{equation}
\sum_{j=\lambda^{n/2}}^{\lambda^{n/2}B_\lambda^{-\beta n/2}}N_{j^{2/n},[B_\lambda, \epsilon]}(\lambda)
=\sum_{j=\lambda^{n/2}}^{\lambda^{n/2}B_\lambda^{-\beta n/2}}N_{1,[j^{1/d}B_\lambda, j^{1/d}\epsilon]}(\lambda j^{-2/d}),
\end{equation} 
where
\begin{equation}
d:=n\left(1+\frac{\beta}{2}\right).
\end{equation}
Since $B_\lambda\ge\frac{1}{2}\lambda^{-1/2}$, we have 
$\lambda^{n/2}B_\lambda^{-\beta n/2}\le C_{n,\beta} \lambda^{d/2}$ and thus, $\lambda j^{-2/d}\ge C_{n,\beta}$, where $C_{n,\beta}>0$ is a constant only depending on $n$, $\beta$, and which may change in every equation. By \cite[(3.43)]{PhD}, there is a constant $C>0$ independent of $\lambda$ such that  
\begin{equation}
\sum_{j=\lambda^{n/2}}^{\lambda^{n/2}B_\lambda^{-\beta n/2}}N_{1,[j^{1/d}B_\lambda, j^{1/d}\epsilon]}(\lambda j^{-2/d})
\le C\sum_{j=\lambda^{n/2}}^{\lambda^{n/2}B_\lambda^{-\beta n/2}}\left(\lambda j^{-\frac{2}{d}}\right)^{\frac{1}{2}+\frac{1}{\beta}}
= C\lambda^{\frac{1}{2}+\frac{1}{\beta}}\sum_{j=\lambda^{n/2}}^{\lambda^{n/2}B_\lambda^{-\beta n/2}}j^{-\frac{2}{n\beta}}.
\end{equation} 
If $\beta>2/n$, then we can estimate for a constant $C>0$ depending on $n$ and $\beta$
\begin{equation}
\lambda^{\frac{1}{2}+\frac{1}{\beta}}\sum_{j=\lambda^{n/2}}^{\lambda^{n/2}B_\lambda^{-\beta n/2}}j^{-\frac{2}{n\beta}}\le C \lambda^{\frac{1}{2}+\frac{1}{\beta}}\left(\lambda^{n/2}B_\lambda^{-\beta n/2}\right)^{1-\frac{2}{n\beta}}
=C\lambda^{\frac{n+1}{2}}B_\lambda^{-\frac{\beta n}{2}\left(1-\frac{2}{n\beta}\right)},
\end{equation}
which proves $(i)$. 

\medskip

If $\beta=2/n$, then 
\begin{equation}
\lambda^{\frac{1}{2}+\frac{1}{\beta}}\sum_{j=\lambda^{n/2}}^{\lambda^{n/2}B_\lambda^{-\beta n/2}}j^{-\frac{2}{n\beta}}
=\lambda^{\frac{n+1}{2}}\sum_{j=\lambda^{n/2}}^{\lambda^{n/2}B_\lambda^{-1}}j^{-1}
\le C\lambda^{\frac{n+1}{2}}\left(1+\log\left(\frac{\lambda^{n/2}B_\lambda^{-1}}{\lambda^{n/2}}\right)\right)
=C\lambda^{\frac{n+1}{2}}\left(1+\left|\log\left(B_\lambda\right)\right|\right).
\end{equation}
This shows $(ii)$. 

\medskip

For the proof of $(iii)$, note that similar to above, we need to estimate
\begin{equation}\label{eq:B_lambda_2_start}
\sum_{j=\lambda^{n/2}}^{\lambda^{n/2}B_\lambda^{-\beta n/2}}N_{j^{2/n},[B_\lambda, 2B_\lambda]}(\lambda)
\end{equation} 
Now, since we have for any $\mu>0$ that $P_\mu\ge-\partial_x^2$, and since $B_\lambda\ge\frac{1}{2}\lambda^{-1/2}$, we find
\begin{equation}
N_{j^{2/n},[B_\lambda, 2B_\lambda]}(\lambda)\le C\sqrt{\lambda}B_\lambda.
\end{equation}
Combining this with \eqref{eq:B_lambda_2_start}, we obtain
\begin{equation}
\sum_{j=\lambda^{n/2}}^{\lambda^{n/2}B_\lambda^{-\beta n/2}}N_{j^{2/n},[B_\lambda, 2B_\lambda]}(\lambda)\le C\lambda^{(n+1)/2}B_\lambda^{1-\beta n/2}=C\lambda^{(n+1)/2}B_\lambda^{\frac{n\beta}{2}\left(1-\frac{2}{n\beta}\right)}.
\end{equation}

\medskip

For $(iv)$, we need to show that 
\begin{equation}
N_{\tilde\Delta, X_{\frac{1}{2}\lambda^{-1/2}}}(3\lambda)=\sum_{j\in\N}N_{\mu_j,[0,\frac{1}{2}\lambda^{-1/2}]}(3\lambda).
\end{equation}
is zero. To see this, note that for any $\mu>0$, we have on $[0,\frac{1}{2}\lambda^{-1/2}]$
\begin{equation}
P_\mu\ge \frac{C_\beta}{x^2}\ge4\lambda C_\beta\ge 3\lambda,
\end{equation}
where we have used that $\beta\ge 2/n$ and 
\begin{equation}
C_\beta:=\frac{\beta n}{4}\left(1+\frac{\beta n}{4}\right)\ge\frac{3}{4}.
\end{equation}

\bigskip

Let us now turn to the proof of the optimality statements $(v)$, $(vi)$ and $(vii)$. First note that for any $\hat c>0$, if $B_\lambda\ge \hat c\lambda^{-\frac{1}{2}}$, then we have for all $x\ge B_\lambda$
\begin{equation}
\frac{C_\beta}{x^2}\le\frac{C_\beta}{\hat c^2}\lambda. 
\end{equation}
Thus, if we choose $\hat c$ depending on $n,\beta$ large enough, then 
\begin{equation}\label{eq:C_beta_x_2_est}
\frac{C_\beta}{x^2}\le\frac{\lambda}{2}. 
\end{equation}
We define the operator 
\begin{equation}
\hat P_\mu:=-\partial_x^2+\mu x^\beta
\end{equation}
and note that by \eqref{eq:C_beta_x_2_est} 
both for Dirichlet boundary conditions at both endpoints of the interval, denoted by $D$, we have on any interval $I$ of the form $[B_\lambda,\epsilon]$ or $[B_\lambda,2B_\lambda]$ 
\begin{equation}\label{eq:P_hat_P_halbe_est}
N^{D}_{\mu,I}(\lambda)\ge \hat N^{D}_{\mu,I}\left(\frac{\lambda}{2}\right)
\end{equation}
Let us note that the boundary conditions are not important here, but we will only need Dirichlet boundary conditions in the following. 
Here $\hat N$ denotes the corresponding number of eigenvalues of $\hat P_{\mu}$ on the interval $I$. 

\medskip

To see the optimality, note that for $(v)$ and $(vi)$, it suffices to prove the corresponding bound for 
\begin{equation}
\sum_{j=\epsilon^{-\beta n/2}\lambda^{n/2}}^{\lambda^{n/2}B_\lambda^{-\beta n/2}}N^D_{j^{2/n},[B_\lambda, \epsilon]}(\lambda).
\end{equation}
Here the $D$ stands for Dirichlet boundary conditions at both endpoints. The fact that we replaced $\lambda^{n/2}$ in the lower bound of the sum by $\epsilon^{-\beta n/2}\lambda^{n/2}$ does not change anything for the proof, but it simplifies our notation in the following. First note that by \eqref{eq:P_hat_P_halbe_est}, we have
\begin{equation}\label{eq:opt_gross_start}
\sum_{j=\epsilon^{-\beta n/2}\lambda^{n/2}}^{\lambda^{n/2}B_\lambda^{-\beta n/2}}N^D_{j^{2/n},[B_\lambda, \epsilon]}(\lambda)
\ge\sum_{j=\epsilon^{-\beta n/2}\lambda^{n/2}}^{\lambda^{n/2}B_\lambda^{-\beta n/2}}\hat N^D_{j^{2/n},[B_\lambda, \epsilon]}\left(\frac \lambda 2\right)
= \sum_{j=\epsilon^{-\beta n/2}\lambda^{n/2}}^{\lambda^{n/2}B_\lambda^{-\beta n/2}}\hat N^D_{1,[j^{1/d}B_\lambda, j^{1/d}\epsilon]}\left(\frac{\lambda j^{-2/d}}{2}\right),
\end{equation}
where we used the unitary equivalence of $\hat P_\mu$ on an interval $[a,b]$ and $\mu^{2/(2+\beta)}\hat P_1$ on $\left[\mu^{1/(2+\beta)}a,\mu^{1/(2+\beta)}b\right]$ in the last step. 

We use the notation $\omega:=\frac{1}{2}\lambda j^{-2/d}$. Note that 
\begin{equation}
\text{if } j\ge\left(\frac{1}{\epsilon}\right)^{\frac{n\beta}{2}}\left(\frac{1}{2}\right)^{\frac{n}{2}}\lambda^{\frac{n}{2}}, \, 
\text{ then } j^{\frac{1}{d}} \epsilon\ge\omega^{\frac{1}{\beta}}
\end{equation}
and 
\begin{equation}
\text{if } j\le\left(\frac{1}{2}\right)^{\frac{n\beta}{2}+\frac{n\beta}{2}}B_\lambda^{-\frac{n\beta}{2}}\lambda^{\frac{n}{2}}, \, 
\text{ then }  j^{\frac{1}{d}} B_\lambda\le\frac{1}{2}\omega^{\frac{1}{\beta}}.
\end{equation}
It follows that, using the notation $\omega(j,\lambda):=\frac{1}{2}\lambda j^{-2/d}$, 
\begin{equation}\label{eq:opt_gross_omega}
\sum_{j=\epsilon^{-\beta n/2}\lambda^{n/2}}^{\lambda^{n/2}B_\lambda^{-\beta n/2}}\hat N^D_{1,[j^{1/d}B_\lambda, j^{1/d}\epsilon]}\left(\frac{\lambda j^{-2/d}}{2}\right)
\ge \sum_{j=\epsilon^{-\beta n/2}\lambda^{n/2}}^{2^{-\frac{n}{2}-\frac{n\beta}{2}}\lambda^{n/2}B_\lambda^{-\beta n/2}}\hat N^D_{1,[\frac{1}{2}\omega(j,\lambda)^{1/\beta}, \omega(j,\lambda)^{1/\beta}]}\left(\omega(j,\lambda)^{1/\beta}\right)
\end{equation}

Similar to our remark earlier in the proof, since $B_\lambda\ge\hat c\lambda^{-1/2}$, we have $\omega\ge C_{n,\beta}>0$. Similar to \cite[(3.31)]{PhD}, there exists a constant $c>0$ such that for all $\omega\ge C_{n,\beta}$, we have
\begin{equation}\label{eq:opt_P_Hut_untere_Schranke}
\hat N^D_{1,\left[\frac{1}{2} \omega^{1/\beta}, \omega^{1/\beta}\right]}\left(\omega\right)\ge c\omega^{\frac{1}{2}+\frac{1}{\beta}}. 
\end{equation}
Thus, we obtain from \eqref{eq:opt_gross_start}, \eqref{eq:opt_gross_omega} and \eqref{eq:opt_P_Hut_untere_Schranke} 
\begin{equation}
\sum_{j=\epsilon^{-\beta n/2}\lambda^{n/2}}^{\lambda^{n/2}B_\lambda^{-\beta n/2}}N^D_{j^{2/n},[B_\lambda, \epsilon]}(\lambda)
\ge C \sum_{j=\epsilon^{-\beta n/2}\lambda^{n/2}}^{2^{-\frac{n}{2}-\frac{n\beta}{2}}\lambda^{n/2}B_\lambda^{-\beta n/2}} \left(\lambda j^{-2/d}\right)^{\frac{1}{2}+\frac{1}{\beta}}.
\end{equation}
For the rest of the proofs of $(v)$ and $(vi)$, one can follow the calculations at the end of the proofs of $(i)$ and $(ii)$. 

\bigskip

Let us now turn to the proof of $(vii)$. By \eqref{eq:P_hat_P_halbe_est}, it suffices to prove the corresponding bound for
\begin{equation}
\sum_{j=\lambda^{n/2}}^{\lambda^{n/2}B_\lambda^{-\beta n/2}}\hat N^D_{j^{2/n},[B_\lambda,2B_\lambda]}\left(\frac \lambda 2\right)
\end{equation}
Note that
\begin{equation}
\text{if } j\le 2^{-n-\frac{n\beta}{2}}\lambda^{\frac{n}{2}}B_\lambda^{-\frac{\beta n}{2}},\, \text{ then } j^{\frac{2}{n}}\left(2 B_\lambda\right)^\beta\le\frac{\lambda}{4}.
\end{equation}
Thus, we have 
\begin{align}
\begin{split}
&\quad \sum_{j=\lambda^{n/2}}^{\lambda^{n/2}B_\lambda^{-\beta n/2}}\hat N^D_{j^{2/n},[B_\lambda,2B_\lambda]}\left(\frac \lambda 2\right)\\
&\ge
\sum_{j=2^{-1-n-n\beta/2}\lambda^{n/2}B_\lambda^{-\beta n/2}}^{2^{-n-n\beta/2}\lambda^{n/2}B_\lambda^{-\beta n/2}} 
\#\left\{
\text{number of eigenvalues of } -\partial_x^2 \text{ on } 
[0,B_\lambda] \text { that are } < \frac{\lambda}{4}
\right\}\\
&\ge C\lambda^{n/2}B_\lambda^{-\beta n/2} \sqrt{\lambda}B_\lambda
=C\lambda^{\frac{n+1}{2}}B_\lambda^{1-\frac{n\beta}{2}},
\end{split}
\end{align}
which is the desired result.
\end{proof}

\section{Estimate of the localisation error and conclusion}\label{s:conclusion}
In this section, we give the proof Theorem \ref{th:main} combining the estimates from Proposition \ref{prop:localisation}(ii) and Proposition \ref{prop:number_eigenvalues}(i),(ii). In the critical case $\beta=2/n$, this is straightforwardly possible. In the supercritical case $\beta>2/n$, we use an additional bootstrap argument to improve the final estimate in Theorem \ref{th:main}. 

\subsection{Proof of Theorem \ref{th:main} for $\beta=2/n$}

\begin{proof}[Proof of Theorem \ref{th:main} for $\beta=2/n$]
Let $\beta=2/n$. Choose $\epsilon>0$ small enough such that the corresponding $\delta$ in \eqref{eq:g_g_tilde_est} satisfies $\delta\le1$. Recall from Remark \ref{re:xF_von_Indikator_beim_Rand} that it suffices to prove
\begin{equation}\label{eq:goal_th_proof_recall_crit}
\int_{X\setminus X_{A_\lambda}} F_\lambda\,dv_g\le C A_\lambda. 
\end{equation}
We first apply Proposition \ref{prop:localisation}(ii) with $a:=A_\lambda/2$ and $b=A_\lambda$. 
In particular, we can take a smooth function $\chi : [0, \epsilon] \to [0, 1]$ with $\chi(x) = 0$ for $x \leq A_\lambda/2$ and $\chi(x) = 1$ for $x \geq A_\lambda$, and such that 
\begin{equation}\label{eq:chi_der}
|\chi'(x)|\le\frac{4}{A_\lambda}\quad\text{for all } x\in [0, \epsilon].
\end{equation}
By Proposition \ref{prop:localisation}(ii), and \eqref{eq:chi_der}, we have
\begin{align}
\begin{split}
&\quad
 \sum_{\substack{\Psi \text{ eigenfunction} \\ \text{of } \Delta \text{ with eigenvalue } < \lambda}} \int_{X \setminus X_{A_\lambda}} |\Psi|^2 \, dv_g \\
&\leq \frac{1}{\lambda} \Tr_{L^{2}(X \setminus X_{\frac{A_\lambda}{2}}, dv_{\tilde g})} \left( (2(1+\delta)^2 \lambda - \tilde{\Delta})_{+} \right) + \frac{1}{\lambda} \sum_{\substack{\Psi \text{ eigenfunction} \\ \text{of } \Delta \text{ with eigenvalue } < \lambda}} \int_{X} |\chi'|^2 |\Psi|^2 \, dv_g\\
&\leq \frac{1}{\lambda} \Tr_{L^{2}(X \setminus X_{\frac{A_\lambda}{2}}, dv_{\tilde g})} \left( (2(1+\delta)^2 \lambda - \tilde{\Delta})_{+} \right) + \frac{16}{\lambda A_\lambda^2} \sum_{\substack{\Psi \text{ eigenfunction} \\ \text{of } \Delta \text{ with eigenvalue } < \lambda}} \int_{X} |\Psi|^2 \, dv_g.
\end{split}
\end{align}
Now, dividing by $N(\lambda)$ and using the definition of $F_\lambda$, see \eqref{eq:F_lambda_def}, we obtain 
\begin{align}\label{eq:F_lambda_first_est_crit_proof}
\begin{split}
&\quad\quad\int_{X\setminus X_{A_\lambda}} F_\lambda\,dv_g= 
\frac{1}{N(\lambda)}\sum_{\substack{\Psi \text{ eigenfunction} \\ \text{of } \Delta \text{ with eigenvalue } < \lambda}} \int_{X \setminus X_{A_\lambda}} |\Psi|^2 \, dv_g \\
&\leq \frac{1}{\lambda N(\lambda)} \Tr_{L^{2}(X \setminus X_{\frac{A_\lambda}{2}}, dv_{\tilde g})} \left( (2(1+\delta)^2 \lambda - \tilde{\Delta})_{+} \right) + \frac{16}{\lambda A_\lambda^2}.
\end{split}
\end{align}
Note that by Proposition \ref{prop:number_eigenvalues}(ii) applied with $B_\lambda=A_\lambda/2$ and $\lambda$ replaced by $8\lambda$, we have
\begin{align}
\begin{split}
&\Tr_{L^{2}(X \setminus X_{\frac{A_\lambda}{2}}, dv_{\tilde g})} \left( (2(1+\delta)^2 \lambda - \tilde{\Delta})_{+} \right)
\le 2(1+\delta)^2 \lambda N_{\tilde\Delta,X \setminus X_{\frac{A_\lambda}{2}}}(2(1+\delta)^2 \lambda)\\
&\le 8 \lambda N_{\tilde\Delta,X \setminus X_{\frac{A_\lambda}{2}}}(8 \lambda)
\le \lambda C \lambda^{\frac{n+1}{2}}|\log(8A_{\lambda})|\le C \lambda^{1+\frac{n+1}{2}}|\log(A_{\lambda})|.
\end{split}
\end{align}
Plugging this into \eqref{eq:F_lambda_first_est_crit_proof} and using that for $\beta=2/n$,
\begin{equation}
N(\lambda)\sim C\lambda^{(n+1)/2}\log(\lambda), \quad \text{as } \lambda\to\infty,
\end{equation}
we get 
\begin{equation}\label{eq:F_lambda_proof_crit_est}
\int_{X\setminus X_{A_\lambda}} F_\lambda\,dv_g\le C\frac{|\log(A_{\lambda})|}{\log(\lambda)}+\frac{16}{\lambda A_\lambda^2}.
\end{equation}
In Theorem \ref{th:main}(ii), namely if $\beta=2/n$, we choose 
\begin{equation}
A_\lambda:=\frac{\log(\log(\lambda))}{\log(\lambda)}.
\end{equation}
Using this choice of $A_\lambda$ in \eqref{eq:F_lambda_proof_crit_est} and $1/\lambda=o(A_\lambda^3)$ as $\lambda\to\infty$, we obtain
\begin{align}
\begin{split}
\int_{X\setminus X_{A_\lambda}} F_\lambda\,dv_g\le C\frac{\log(A_{\lambda}^{-1})}{\log(\lambda)}+\frac{16}{\lambda A_\lambda^2}
\le C\frac{\log\left(\frac{\log(\lambda)}{\log(\log(\lambda))}\right)}{\log(\lambda)}+CA_\lambda\le 2C A_\lambda. 
\end{split}
\end{align}
This finishes the proof of \eqref{eq:goal_th_proof_recall_crit} for $\beta=2/n$. 
\end{proof}

\subsection{Proof of Theorem \ref{th:main} for $\beta>2/n$}
\begin{proof}[Proof of Theorem \ref{th:main} for $\beta>2/n$]
Let $\beta>2/n$. Choose $\epsilon>0$ small enough such that the corresponding $\delta$ in \eqref{eq:g_g_tilde_est} satisfies $\delta\le1$.  Recall from Remark \ref{re:xF_von_Indikator_beim_Rand} that it suffices to prove
\begin{equation}\label{eq:goal_th_proof_recall_super}
\int_{X\setminus X_{A_\lambda}} F_\lambda\,dv_g\le C A_\lambda. 
\end{equation}
For $B_\lambda$ to be chosen, and $k\in\N$ to be chosen, we apply Proposition \ref{prop:localisation}(ii) with $a:=kB_\lambda$ and $b=(k+1)B_\lambda$. In particular, we can take a smooth function $\chi : [0, \epsilon] \to [0, 1]$ with $\chi(x) = 0$ for $x \leq kB_\lambda$ and $\chi(x) = 1$ for $x \geq kB_\lambda$, and such that 
\begin{equation}\label{eq:chi_der2}
|\chi'(x)|\le\frac{2}{B_\lambda}\quad\text{for all } x\in [0, \epsilon].
\end{equation}
By Proposition \ref{prop:localisation}(ii), and \eqref{eq:chi_der2}, we have
\begin{align}
\begin{split}
&\quad
 \sum_{\substack{\Psi \text{ eigenfunction} \\ \text{of } \Delta \text{ with eigenvalue } < \lambda}} \int_{X \setminus X_{(k+1)B_\lambda}} |\Psi|^2 \, dv_g \\
&\leq \frac{1}{\lambda} \Tr_{L^{2}(X \setminus X_{kB_\lambda}, dv_{\tilde g})} \left( (2(1+\delta)^2 \lambda - \tilde{\Delta})_{+} \right) + \frac{1}{\lambda} \sum_{\substack{\Psi \text{ eigenfunction} \\ \text{of } \Delta \text{ with eigenvalue } < \lambda}} \int_{X} |\chi'|^2 |\Psi|^2 \, dv_g\\
&\leq \frac{1}{\lambda} \Tr_{L^{2}(X \setminus X_{kB_\lambda}, dv_{\tilde g})} \left( (2(1+\delta)^2 \lambda - \tilde{\Delta})_{+} \right) + \frac{4}{\lambda B_\lambda^2} \sum_{\substack{\Psi \text{ eigenfunction} \\ \text{of } \Delta \text{ with eigenvalue } < \lambda}} \int_{X_{(k+1)B_\lambda}\setminus X_{kB_\lambda}} |\Psi|^2 \, dv_g.
\end{split}
\end{align}
Now, dividing by $N(\lambda)$ and using the definition of $F_\lambda$, see \eqref{eq:F_lambda_def}, we obtain 
\begin{align}\label{eq:F_lambda_first_est_super_proof}
\begin{split}
&\quad\quad\int_{X\setminus X_{(k+1)B_\lambda}} F_\lambda\,dv_g= 
\frac{1}{N(\lambda)}\sum_{\substack{\Psi \text{ eigenfunction} \\ \text{of } \Delta \text{ with eigenvalue } < \lambda}} \int_{X\setminus X_{(k+1)B_\lambda}} |\Psi|^2 \, dv_g \\
&\leq \frac{1}{\lambda N(\lambda)}  \Tr_{L^{2}(X \setminus X_{kB_\lambda}, dv_{\tilde g})} \left( (2(1+\delta)^2 \lambda - \tilde{\Delta})_{+} \right) + \frac{4}{\lambda B_\lambda^2}  \int_{X_{(k+1)B_\lambda}\setminus X_{kB_\lambda}} F_\lambda\, dv_g.
\end{split}
\end{align}
By Proposition \ref{prop:number_eigenvalues}(i) applied with $B_\lambda$ replaced by $kB_\lambda$ and $\lambda$ replaced by $8\lambda$, we have
\begin{align}\label{eq:trace_proof_super}
\begin{split}
&\Tr_{L^{2}(X \setminus X_{kB_\lambda}, dv_{\tilde g})} \left( (2(1+\delta)^2 \lambda - \tilde{\Delta})_{+} \right)
\le 2(1+\delta)^2 \lambda N_{\tilde\Delta,X \setminus X_{kB_\lambda}}(2(1+\delta)^2 \lambda)\\
&\le 8 \lambda N_{\tilde\Delta,X \setminus X_{kB_\lambda}}(8 \lambda)
\le \lambda C \lambda^{\frac{n+1}{2}}(kB_{\lambda})^{-\frac{\beta n}{2}\left(1-\frac{2}{\beta n}\right)}=  C \lambda^{1+\frac{n+1}{2}}(kB_{\lambda})^{-\frac{\beta n}{2}\left(1-\frac{2}{\beta n}\right)}
.
\end{split}
\end{align}
Here the constant $C>0$ is independent of $k\in\N$.
Next, we combine 
\begin{equation}
N(\lambda)\sim C\lambda^{d/2}, \quad \text{as } \lambda\to\infty \quad \text{where } d:=n\left(1+\frac{\beta}{2}\right), 
\end{equation}
see \cite{yves}, and \eqref{eq:trace_proof_super}, to get from \eqref{eq:F_lambda_first_est_super_proof}
\begin{equation}\label{eq:before_bootstrap}
\int_{X\setminus X_{(k+1)B_\lambda}} F_\lambda\,dv_g
\le C \lambda^{\frac{1}{2}-\frac{\beta n}{4}}(kB_{\lambda})^{-\frac{\beta n}{2}\left(1-\frac{2}{\beta n}\right)}+ \frac{4}{\lambda B_\lambda^2}  \int_{X_{(k+1)B_\lambda}\setminus X_{kB_\lambda}} F_\lambda\, dv_g.
\end{equation}
At this point, we could estimate $\int_{X_{(k+1)B_\lambda}\setminus X_{kB_\lambda}} F_\lambda\, dv_g$ by $1$ and optimise in $B_\lambda$, pick $k=1$ and take $A_\lambda=B_\lambda/2$. However, we get a much better estimate if we use a bootstrap argument to show that for some $k\in\N$, the second term in \eqref{eq:before_bootstrap}, which represents the localisation error, has to be of the same order as the first term in the estimate in \eqref{eq:before_bootstrap}. 

\medskip

Let us now explain the details of the bootstrap argument. For $\gamma>0$, which we will choose as $\gamma:=1/(n\beta)$, we take 
\begin{equation}\label{eq:B_lambda_def}
B_\lambda := \frac{1}{4\gamma^{-1}+3}\lambda^{-\frac{1}{2}+\gamma}. 
\end{equation}
We will use \eqref{eq:before_bootstrap} for $k\in\N$ with $k\le 4\gamma^{-1}+2$. Using the notation
\begin{equation}
\F_k:= \int_{X\setminus X_{kB_\lambda}} F_\lambda\,dv_g, 
\end{equation}
\eqref{eq:before_bootstrap} implies for all $k\in\N$ with $k\le 4\gamma^{-1}+2$
\begin{equation}\label{eq:F_k1_est_ohne_gamma_gewaehlt}
\F_{k+1}\le C\lambda^{-\gamma\frac{\beta n}{2}\left(1-\frac{2}{\beta n}\right)}+C\lambda^{-2\gamma}\F_{k}
\end{equation}
with a constant $C>0$ that depends on $\gamma$, but which is independent of $k\in\N$ with $k\le 4\gamma^{-1}+2$. Since we later want to choose $A_\lambda$ as a constant times $B_\lambda$ and we aim for an estimate of the form \eqref{eq:goal_th_proof_recall_super}, the optimal choice for $\gamma$ is to take it as 
\begin{equation}\label{eq:gamma_def}
\gamma :=\frac{1}{n\beta}.
\end{equation}
With this choice, \eqref{eq:F_k1_est_ohne_gamma_gewaehlt} now reads
\begin{equation}\label{eq:F_k1_est}
\F_{k+1}\le C\lambda^{-\frac{1}{2}+\gamma}+C\lambda^{-2\gamma}\F_{k}
\end{equation}
We distinguish two cases:

{\bf Case 1: } There exists a $k\in\N$ with $k\le 4\gamma^{-1}+2$ such that 
\begin{equation}
\lambda^{-2\gamma}\F_{k}\le \lambda^{-\frac{1}{2}+\gamma}.
\end{equation}

{\bf Case 2: } For all $k\in\N$ with $k\le 4\gamma^{-1}+2$, we have
\begin{equation}
\lambda^{-2\gamma}\F_{k}\ge\lambda^{-\frac{1}{2}+\gamma}.
\end{equation}

\medskip

In case 1, we pick the corresponding $k_0$ and get from \eqref{eq:F_k1_est} 
\begin{equation}
\F_{k_0+1}\le 2C\lambda^{-\frac{1}{2}+\gamma}.
\end{equation}

\medskip

In case 2, we obtain from \eqref{eq:F_k1_est} that for all $k\in\N$ with $k\le 4\gamma^{-1}+2$, we have
\begin{equation}
\F_{k+1}\le 2C\lambda^{-2\gamma}\F_{k}.
\end{equation}
It follows that
\begin{equation}\label{eq:F_k1_est_case2}
\F_{k+1}\le \left(2C\lambda^{-2\gamma}\right)^{k-1}\F_{1}\le \left(2C\lambda^{-2\gamma}\right)^{k-1}\le C  \lambda^{-2\gamma(k-1)}
\end{equation}
where we used $\F_1\le1$, and the constant $C>0$ was possibly changed in the last step. Now we pick $k_0\in\N$ with $k_0\in[4\gamma^{-1}+1, 4\gamma^{-1}+2]$ and get from \eqref{eq:F_k1_est_case2} 
\begin{equation}
\F_{k_0+1}\le C \lambda^{-\frac{1}{2}}.
\end{equation}

\medskip

In both cases, we have shown that we can find $k_0\in\N$ with $k_0\le 4\gamma^{-1}+2$ and 
\begin{equation}\label{eq:F_k01_est}
\F_{k_0+1}\le 2C\lambda^{-\frac{1}{2}+\gamma}.
\end{equation}
Recall that we will take $A_\lambda=\lambda^{-\frac{1}{2}+\frac{1}{n\beta}}=\lambda^{-\frac{1}{2}+\gamma}$ and $B_\lambda$ was defined in \eqref{eq:B_lambda_def}. In particular, we have
\begin{equation}
(k_0+1)B_\lambda\le A_\lambda. 
\end{equation}
By \eqref{eq:F_k01_est}, it follows that 
\begin{equation}
\int_{X\setminus X_{A_\lambda}} F_\lambda\,dv_g\le \int_{X\setminus X_{(k_0+1)B_\lambda}} F_\lambda\,dv_g=\F_{k_0+1}\le 2C\lambda^{-\frac{1}{2}+\gamma}=2CA_\lambda. 
\end{equation}
This finishes the proof of \eqref{eq:goal_th_proof_recall_super} in the supercritical case $\beta>2/n$. 
\end{proof}

\section{Proof of Theorem \ref{th:moments}}\label{s:conclusion_moments}
In this section, we prove Theorem \ref{th:moments}. To this end, we will combine Proposition \ref{prop:localisation}(i) and \ref{prop:number_eigenvalues}(iii),(iv). 

\begin{proof}[Proof of Theorem \ref{th:moments}]
We take $\epsilon\in(0,1]$ small enough such that the corresponding $\delta>0$ satisfies $2(1+\delta)^2\le3$. We choose $\chi$ such that $\chi(x,y)=x^{p/2}$ for $(x,y)\in X_\epsilon$ and $\chi(z)=\epsilon^{p/2}$ for all $z\in X\setminus X_\epsilon$.
Dividing by $N(\lambda)$, Proposition \ref{prop:localisation}(i) yields
\begin{align}\label{eq:moments_proof_beginning}
\begin{split}
&\quad\int_{X} \min\{x^p,\epsilon^{p/2}\} F_\lambda(x,y)\,dv_g(x,y)\\
&\le \frac{1}{\lambda N(\lambda)} \Tr_{L^{2}(X, dv_{\tilde g})} \left( \chi(2(1+\delta)^2 \lambda - \tilde{\Delta})\chi\right)_{+}+ \frac{1}{\lambda}\int_{X} |\nabla\chi|^2 F_\lambda \, dv_g\\
&\le \frac{1}{\lambda N(\lambda)} \Tr_{L^{2}(X, dv_{\tilde g})} \left( \chi(2(1+\delta)^2 \lambda - \tilde{\Delta})\chi\right)_{+}+ \frac{1}{\lambda}.
\end{split}
\end{align}
Here we used that by $p\ge2$ and $\epsilon\le1$, we have $|\nabla\chi|=|\chi'|\le1$ and moreover, $\int_{X} F_\lambda \, dv_g=1$. In order to estimate the term with the trace, we will decompose the trace over ${L^{2}(X, dv_{\tilde g})}$ into traces over smaller sets: we have
\begin{equation}\label{eq:X_decomposition2}
X=(X\setminus X_\epsilon)\cup X_{2^{-1}\lambda^{-1/2}}\cup\bigcup_{k=0}^{\frac{2\log(\epsilon)+\log(\lambda)}{2\log(2)}}\left(X_{2^{k}\lambda^{-1/2}}\setminus X_{2^{k-1}\lambda^{-1/2}}\right), 
\end{equation} 
We have
\begin{align}\label{eq:trace_proof_split_moments}
\begin{split}
&\quad \Tr_{L^{2}(X, dv_{\tilde g})} \left( \chi(2(1+\delta)^2 \lambda - \tilde{\Delta})\chi\right)_{+}\\
&\le 
\Tr_{L^{2}(X\setminus X_\epsilon, dv_{\tilde g})} \left( \chi(2(1+\delta)^2 \lambda - \tilde{\Delta})\chi\right)_{+}
+\Tr_{L^{2}(X_{2^{-1}\lambda^{-1/2}}, dv_{\tilde g})} \left( \chi(2(1+\delta)^2 \lambda - \tilde{\Delta})\chi\right)_{+}
\\
&\quad
+\sum_{k=0}^{\frac{2\log(\epsilon)+\log(\lambda)}{2\log(2)}}\Tr_{L^{2}(X_{2^{k}\lambda^{-1/2}}\setminus X_{2^{k-1}\lambda^{-1/2}}, dv_{\tilde g})} \left( \chi(2(1+\delta)^2 \lambda - \tilde{\Delta})\chi\right)_{+}\\
&\le 2(1+\delta)^2 \lambda N_{\tilde\Delta, X\setminus X_\epsilon}(2(1+\delta)^2 \lambda)+0\\
&\quad
+\sum_{k=0}^{\frac{2\log(\epsilon)+\log(\lambda)}{2\log(2)}}\Tr_{L^{2}(X_{2^{k}\lambda^{-1/2}}\setminus X_{2^{k-1}\lambda^{-1/2}}, dv_{\tilde g})} \left( \chi(2(1+\delta)^2 \lambda - \tilde{\Delta})\chi\right)_{+}\\
&\le C\lambda^{1+\frac{n+1}{2}}+\sum_{k=0}^{\frac{2\log(\epsilon)+\log(\lambda)}{2\log(2)}}\Tr_{L^{2}(X_{2^{k}\lambda^{-1/2}}\setminus X_{2^{k-1}\lambda^{-1/2}}, dv_{\tilde g})} \left( \chi(2(1+\delta)^2 \lambda - \tilde{\Delta})\chi\right)_{+},
\end{split}
\end{align}
where we used Proposition \ref{prop:localisation}(iv) and $2(1+\delta)^2\le3$ for the second term. Note that for any set $0\le k \le \frac{2\log(\epsilon)+\log(\lambda)}{2\log(2)}$ and any $\Phi\in L^{2}(X_{2^{k}\lambda^{-1/2}}\setminus X_{2^{k-1}\lambda^{-1/2}}, dv_{\tilde g})$, and using 
\begin{equation}
(a+b)^2\ge\frac{1}{2}a^2-b^2
\end{equation}
and $\chi'(x)=\frac{p}{2} x^{\frac{p}{2}-1}$, we have
\begin{align}
\begin{split}
&\quad\int_{X_{2^{k}\lambda^{-1/2}}\setminus X_{2^{k-1}\lambda^{-1/2}}}|\tilde\nabla(\chi\Phi)|^2\, dv_{\tilde g} 
\ge  \int_{X_{2^{k}\lambda^{-1/2}}\setminus X_{2^{k-1}\lambda^{-1/2}}}|(\tilde\nabla\chi)\Phi+\chi\tilde\nabla\Phi|^2\, dv_{\tilde g} \\
&\ge \int_{X_{2^{k}\lambda^{-1/2}}\setminus X_{2^{k-1}\lambda^{-1/2}}}\left(\frac{1}{2}|\chi\tilde\nabla\Phi|^2-|(\tilde\nabla\chi)\Phi|^2\right)\, dv_{\tilde g}\\
&\ge \int_{X_{2^{k}\lambda^{-1/2}}\setminus X_{2^{k-1}\lambda^{-1/2}}}\left(2^{p(k-1)-1}\lambda^{-p/2}|\tilde\nabla\Phi|^2-\frac{p^2}{4}2^{kp-2}\lambda^{-\frac{p}{2}+1}|\Phi|^2\right)\, dv_{\tilde g}.
\end{split}
\end{align}
It follows that
\begin{align}
\begin{split}
&\quad \Tr_{L^{2}(X_{2^{k}\lambda^{-1/2}}\setminus X_{2^{k-1}\lambda^{-1/2}}, dv_{\tilde g})} \left( \chi(2(1+\delta)^2 \lambda - \tilde{\Delta})\chi\right)_{+}\\
&\le 
\Tr_{L^{2}(X_{2^{k}\lambda^{-1/2}}\setminus X_{2^{k-1}\lambda^{-1/2}}, dv_{\tilde g})} \left( 2(1+\delta)^2 \lambda 2^{pk}\lambda^{-p/2}+\frac{p^2}{4}2^{kp-2}\lambda^{-\frac{p}{2}+1}- 2^{p(k-1)-1}\lambda^{-p/2}\tilde{\Delta}\right)_{+}\\
&=2^{p(k-1)-1}\lambda^{-p/2}\Tr_{L^{2}(X_{2^{k}\lambda^{-1/2}}\setminus X_{2^{k-1}\lambda^{-1/2}}, dv_{\tilde g})} \left( 2^{p+2}(1+\delta)^2 \lambda +p^2 2^{p-2}\lambda- \tilde{\Delta}\right)_{+}\\
&\le C_{p,\delta}2^{p(k-1)}\lambda^{1-p/2}N_{\tilde\Delta, X_{2^{k}\lambda^{-1/2}}\setminus X_{2^{k-1}\lambda^{-1/2}}}(C_{p,\delta} \lambda)
\end{split}
\end{align}
Here we denote by $C_{p,\delta} $ a constant that only depends on $p$ and $\delta$, which may change from line to line. Next, we use Proposition \ref{prop:number_eigenvalues}(iii) with $B_\lambda=2^{(k-1)}\lambda^{-1/2}$ to get
\begin{equation}
N_{\tilde\Delta, X_{2^{k}\lambda^{-1/2}}\setminus X_{2^{k-1}\lambda^{-1/2}}}(C_{p,\delta} \lambda)\le C_{p,\delta,\beta}\lambda^{\frac{n+1}{2}}(2^{k-1}\lambda^{-1/2})^{-\frac{\beta n}{2}\left(1-\frac{2}{\beta n}\right)}
\end{equation}
It follows that
\begin{equation}\label{eq:individual_trace_est_moments}
\Tr_{L^{2}(X_{2^{k}\lambda^{-1/2}}\setminus X_{2^{k-1}\lambda^{-1/2}}, dv_{\tilde g})} \left( \chi(2(1+\delta)^2 \lambda - \tilde{\Delta})\chi\right)_{+}\le 
C_{p,\delta,\beta,n}\lambda^{1+\frac{n+1}{2}} \int_{2^{k-1}\lambda^{-1/2}}^{2^{k}\lambda^{-1/2}}s^{p-\frac{\beta n}{2}}\, ds. 
\end{equation}
Combining \eqref{eq:trace_proof_split_moments} and \eqref{eq:individual_trace_est_moments}, we obtain
\begin{equation}
\Tr_{L^{2}(X, dv_{\tilde g})} \left( \chi(2(1+\delta)^2 \lambda - \tilde{\Delta})\chi\right)_{+}\le C\lambda^{1+\frac{n+1}{2}}+C_{p,\delta,\beta,n}\lambda^{1+\frac{n+1}{2}} \int_{\frac{1}{2}\lambda^{-1/2}}^{\epsilon}s^{p-\frac{\beta n}{2}}\, ds. 
\end{equation}
For $\lambda$ large enough, the second term is greater than or equal to the first one up to a constant. Now plugging this into \eqref{eq:moments_proof_beginning} and observing that we  have $N(\lambda)\ge C\lambda^{\frac{n+1}{2}}$, we obtain for $\lambda$ large enough,
\begin{equation}\label{eq:moments_proof_int_p_beta}
\int_{X} \min\{x^p,\epsilon^{p/2}\} F_\lambda(x,y)\,dv_g(x,y)
\le C_{p,\delta,\beta,n,\epsilon}\lambda^{\frac{n+1}{2}}\frac{1}{N(\lambda)}\int_{\frac{1}{2}\lambda^{-1/2}}^{\epsilon}s^{p-\frac{\beta n}{2}}\, ds+\frac{1}{\lambda}. 
\end{equation}
If $\beta=2/n$, then since $p\ge2>0$, we can always estimate
\begin{equation}
\int_{\frac{1}{2}\lambda^{-1/2}}^{\epsilon}s^{p-\frac{\beta n}{2}}\, ds\le C_{\epsilon, p,n,\beta}.
\end{equation}
Moreover, in this case, we have
\begin{equation}
N(\lambda)\sim C\lambda^{\frac{n+1}{2}}\log(\lambda),
\end{equation}
Thus, from \eqref{eq:moments_proof_int_p_beta}, we get
\begin{equation}
\int_{X} \min\{x^p,\epsilon^{p/2}\} F_\lambda(x,y)\,dv_g(x,y)
\le C_{p,\delta,\beta,n,\epsilon}\frac{1}{\log(\lambda)}+\frac{1}{\lambda}\le C_{p,\delta,\beta,n,\epsilon}\frac{1}{\log(\lambda)}. 
\end{equation}

\medskip

Next, let us consider the case where $\beta>2/n$. In this case, 
\begin{equation}
N(\lambda)\sim C\lambda^{\frac{d}{2}}
\end{equation} 
and by $d=n(1+\beta/2)$, we find that
\begin{equation}
\lambda^{\frac{n+1}{2}}\frac{1}{N(\lambda)}\sim C\lambda^{\frac{1}{2}-\frac{\beta n}{4}}.
\end{equation}
Let us distinguish three cases: 

{\bf Case 1: $p>-1+\frac{\beta n}{2}$. } 
We can estimate
\begin{equation}
\int_{\frac{1}{2}\lambda^{-1/2}}^{\epsilon}s^{p-\frac{\beta n}{2}}\, ds\le C_{\epsilon, p,n,\beta}.
\end{equation}
Thus, from \eqref{eq:moments_proof_int_p_beta}, we get
\begin{equation}
\int_{X} \min\{x^p,\epsilon^{p/2}\} F_\lambda(x,y)\,dv_g(x,y)
\le C_{p,\delta,\beta,n,\epsilon}\lambda^{\frac{1}{2}-\frac{\beta n}{4}}+\frac{1}{\lambda}\le C_{p,\delta,\beta,n,\epsilon}\lambda^{\max\left\{\frac{1}{2}-\frac{\beta n}{4},-1\right\}}.
\end{equation}

{\bf Case 2: $p=-1+\frac{\beta n}{2}$. } In this case, we have 
\begin{equation}
\int_{\frac{1}{2}\lambda^{-1/2}}^{\epsilon}s^{p-\frac{\beta n}{2}}\, ds\le C_{\epsilon, p,n,\beta}\log(\lambda)
\end{equation}
for $\lambda$ large enough. Moreover, by $p\ge2$,
\begin{equation}
\frac{1}{2}-\frac{\beta n}{4}=-\frac{p}{2}\le -1
\end{equation}
and therefore we get  
\begin{equation}
\int_{X} \min\{x^p,\epsilon^{p/2}\} F_\lambda(x,y)\,dv_g(x,y)
\le 
\begin{cases}
C_{p,\delta,\beta,n,\epsilon} \frac{\log(\lambda)}{\lambda} & \text{if } p=2 \\
C_{p,\delta,\beta,n,\epsilon} \frac{1}{\lambda}       & \text{if } p>2
\end{cases}
\end{equation}

{\bf Case 3: $p<-1+\frac{\beta n}{2}$. } In this case, we have
\begin{equation}
\int_{\frac{1}{2}\lambda^{-1/2}}^{\epsilon}s^{p-\frac{\beta n}{2}}\, ds\le C_{\epsilon, p,n,\beta}\lambda^{-\frac{1}{2}\left(1+p-\frac{\beta n}{2}\right)}=C_{\epsilon, p,n,\beta}\lambda^{\frac{1}{2}\left(\frac{\beta n}{2}-1-p\right)}
\end{equation}
Note that
\begin{equation}
C_{p,\delta,\beta,n,\epsilon}\lambda^{\frac{n+1}{2}}\frac{1}{N(\lambda)}\int_{\frac{1}{2}\lambda^{-1/2}}^{\epsilon}s^{p-\frac{\beta n}{2}}\, ds
\le C_{p,\delta,\beta,n,\epsilon}\lambda^{\frac{1}{2}-\frac{\beta n}{4}}\lambda^{\frac{1}{2}\left(\frac{\beta n}{2}-1-p\right)}=C_{p,\delta,\beta,n,\epsilon}\lambda^{-\frac{p}{2}}.
\end{equation}
By $p\ge2$, it follows that
\begin{equation}
\int_{X} \min\{x^p,\epsilon^{p/2}\} F_\lambda(x,y)\,dv_g(x,y)
\le C_{p,\delta,\beta,n,\epsilon} \frac{1}{\lambda}.
\end{equation}

\bigskip 

{\bf Summary. }
To sum up, if $\beta=2/n$, then we have
\begin{equation}
\int_{X} \min\{x^p,\epsilon^{p/2}\} F_\lambda(x,y)\,dv_g(x,y)
\le C_{p,\delta,\beta,n,\epsilon}\frac{1}{\log(\lambda)}. 
\end{equation}
If $\beta>2/n$, and
if $p=2$ and $\beta=6/n$, then we have 
\begin{equation}
\int_{X} \min\{x^p,\epsilon^{p/2}\} F_\lambda(x,y)\,dv_g(x,y)
\le C_{p,\delta,\beta,n,\epsilon} \frac{\log(\lambda)}{\lambda}.
\end{equation}
In all other cases for $\beta>2/n$, we get
\begin{equation}
\int_{X} \min\{x^p,\epsilon^{p/2}\} F_\lambda(x,y)\,dv_g(x,y)
\le C_{p,\delta,\beta,n,\epsilon} \lambda^{\max\left\{\frac{1}{2}-\frac{\beta n}{4},-1\right\}}.
\end{equation}
Dividing the left-hand side by $\epsilon^{p/2}$, we obtain the desired result. 

\bigskip

In view of Remark \ref{re:optimality} below, also note that in the following cases, our estimate for the localisation term is of subleading order or at most of the same order as the main term:
\begin{itemize}
\item $\beta=2/n$
\item $\beta>2/n$ and $p>-1+\frac{\beta n}{2}$ and $\frac{1}{2}-\frac{\beta n}{4}\ge -1$
\item $\beta>2/n$ and $p=-1+\frac{\beta n}{2}$ and $p=2$
\end{itemize}
Put differently, since $p\ge 2$ and $\beta\ge2/n$, we can also express these conditions as 
\begin{equation}\label{eq:beta_cond}
\frac{2}{n}\le \beta\le\frac{6}{n}.
\end{equation}
\end{proof}

\begin{remark}\label{re:optimality}
Both in \eqref{eq:X_ohne_X_A_lambda_into_th} and for $p\ge2$, $\frac{2}{n}\le\beta\le\frac{6}{n}$ in \eqref{eq:moments theorem}, we cannot hope for a much better estimate.
More precisely, in the setting of \eqref{eq:X_ohne_X_A_lambda_into_th} in  Theorem \ref{th:main}, for every $\lambda$ large enough, there exists an orthonormal family in $L^2(X,dv_g)$ of at least $\frac{1}{2}N(\lambda)$ functions, which we call $\{\hat\Psi\}$, such that for every $\hat\Psi$
\begin{equation}\label{eq:ev_lambda_main}
\int_X|\nabla\hat\Psi|^2\,dv_g<\lambda
\end{equation}
and moreover, if $\hat A_\lambda\ge C\lambda^{-1/2}$ for some $C>0$ large enough is such that
\begin{equation}
\frac{1}{N(\lambda)}\int_{X\setminus X_{\hat A_\lambda}} |\hat\Psi|^2\,dv_g\le C \hat A_\lambda,
\end{equation}
then for any $\epsilon>0$, we have for $\lambda$ large enough, 
\begin{equation}
\hat A_\lambda\ge  A_\lambda \lambda^{-\epsilon}.
\end{equation}

\bigskip

Similarly, in the setting of \eqref{eq:moments theorem} in Theorem \ref{th:moments} if $p\ge2$ and $\frac{2}{n}\le\beta\le\frac{6}{n}$, then
%
%
%
%
%
for every $\lambda$ large enough, there exists an orthonormal family in $L^2(X,dv_g)$ of at least $\frac{1}{2}N(\lambda)$ functions, which we call $\{\hat\Psi\}$, such that for every $\hat\Psi$
\begin{equation}
\int_X|\nabla\hat\Psi|^2\,dv_g<\lambda
\end{equation}
and such that we have 
\begin{equation}\label{eq:moments_opt}
\frac{1}{N(\lambda)}\int_{X} \min\{x^p,1\}|\hat\Psi(x,y)|^2\,dv_g(x,y)\ge C A_\lambda\lambda^{-\epsilon}. 
\end{equation}
\end{remark}

\begin{proof}[Proof of Remark \ref{re:optimality}]
In order to see the statement of Remark \ref{re:optimality}, it suffices to take the corresponding set of eigenfunctions of the Dirichlet Laplacian and use the last part of Proposition \ref{prop:number_eigenvalues}. We give a sketch of the proof below. 

\medskip

More precisely, for the optimality of \eqref{eq:X_ohne_X_A_lambda_into_th} in Theorem \ref{th:main}, we take the union of the normalised eigenfunctions of $\Delta$ with Dirichlet boundary conditions at $\partial X_{A_\lambda}$  with eigenvalue less than $\lambda$ on $X_{A_\lambda}$ and on $X\setminus X_{A_\lambda}$. For $\lambda$ large enough, these are at least $\frac{1}{2}N(\lambda)$ eigenfunctions and they satisfy \eqref{eq:ev_lambda_main}, and using Proposition \ref{prop:number_eigenvalues}(v), (vi) and the quasi-isometry of $g$ and $\tilde g$, we obtain the desired result in this case. 

\medskip

For the optimality of \eqref{eq:moments theorem} in Theorem \ref{th:moments} for $p\ge2$ and $\frac{2}{n}\le\beta\le\frac{6}{n}$, one can proceed similarly. We decompose $X$ as follows:
\begin{equation}\label{eq:X_decomposition}
X=(X\setminus X_\epsilon)\cup X_{2^{-1}\lambda^{-1/2}}\cup\bigcup_{k=0}^{\frac{2\log(\epsilon)+\log(\lambda)}{2\log(2)}}\left(X_{2^{k}\lambda^{-1/2}}\setminus X_{2^{k-1}\lambda^{-1/2}}\right), 
\end{equation} 
Then we take the set of eigenfunctions of $\Delta$ with Dirichlet boundary conditions on each of those smaller pieces, use the quasi-isometry of $g$ and $\tilde g$, and Proposition \ref{prop:number_eigenvalues}(vii). Then one can follow the same computations as at the end of the proof of Theorem \ref{th:moments}.
\end{proof}

\bibliographystyle{plain}
\bibliography{References.bib}

\end{document}